\theoremstyle{plain}
\newtheorem{corollary}{Corollary}
\newtheorem{lemma}{Lemma}
\newtheorem{proposition}{Proposition}
\newtheorem*{theorem A}{Theorem A}
\newtheorem*{theorem B}{Theorem B}
\newtheorem*{corollary A}{Corollary A}
\newtheorem*{corollary B}{Corollary B}
\theoremstyle{definition}
\newtheorem{rem}{Remark}
\newtheorem{example}{Example}
\newtheorem*{proof A}{Proof of Theorem A}
\newtheorem*{proof B}{Proof of Theorem B}
\newtheorem*{proof AA}{Proof of Corollary A}
\DeclareMathOperator*{\hd}{\mathrm{dim}_\mathrm{H}}
\begin{document}

\title{Hausdorff dimension of certain random self--affine fractals}
\maketitle \centerline{\small{NUNO LUZIA}}
\centerline{\emph{UFRJ, Rio de Janeiro, Brasil}}
\centerline{\emph{e-mail address}: luzia@impa.br} \vspace{24pt}
\centerline{\scriptsize ABSTRACT}
\text{}\\
\begin{footnotesize}
In this work we are interested in the self--affine fractals studied by Gatzouras and Lalley \cite{5} and by the author
\cite{8} which generalize the famous \emph{general Sierpi\'nski carpets} studied by Bedford \cite{2} and McMullen
\cite{10}. We give a formulae for the Hausdorff dimension of sets which are \emph{randomly} generated using a finite
number of self--affine transformations each one generating a fractal set as mentioned before, with some technical
hypotheses. The choice of the transformation is random according to a Bernoulli measure. The formulae is given in terms
of the variational principle for the dimension.
\end{footnotesize}
\text{}\\\\
\emph{Keywords:} Hausdorff dimension; random; variational principle
\text{}\\\\

\section{Introduction}
A main difficulty in calculating Hausdorff dimension is the
phenomenon of \emph{non-conformality} which arises when we have
several rates of expansion. In the $1$-dimensional (conformal)
setting the computation of Hausdorff dimension is possible, at
least in the \emph{uniformly expanding} context, due to the
thermodynamic formalism introduced by Sinai-Ruelle-Bowen (see \cite{3}
and \cite{12}). The problem of calculating Hausdorff dimension in the
non-conformal setting was first considered by Bedford \cite{2} and
McMullen \cite{10}. They showed independently that for the class of
transformations called \emph{general Sierpi\'nski carpets}, there
exists an ergodic measure of full Hausdorff dimension. Following
these works several extensions have been made, e.g. in \cite{1}, \cite{5}, \cite{6},
\cite{7}, \cite{8} and \cite{9}. In this work we are interested in the self--affine fractals studied in \cite{5} and
\cite{8}, which we call \emph{self--affine Sierpi\'nski carpets}, namely we compute the Hausdorff dimension of a random
version of these sets, with some technical hypotheses.

Let $m\in\mathbb{N}$ and consider the symbolic space $\mathcal{I}=\{1,...,m\}^\mathbb{N}$ equipped with a Bernoulli
measure given by the probability vector ${\bf p}=(p_1,...,p_m)$ (we assume $p_i>0$ for all $i$).
This will be our space to which \emph{random} refers to.

For the moment let us fix $i\in\{1,...,m\}$ and remember the construction of \emph{self--affine Sierpi\'nski carpets}.
Let $S_1, S_2, ..., S_r$ be contractions of $\mathbb{R}^2$. Then
there is a unique nonempty compact set $\Lambda$ of $\mathbb{R}^2$
such that
\[
   \Lambda=\bigcup_{l=1}^{r} S_l(\Lambda) .
\]
We will refer to $\Lambda$ as the limit set of the semigroup
generated by $S_1, S_2, ..., S_r$. Now consider the sets
$\Lambda$ which are limit sets of the semigroup generated by the
$2$-dimensional mappings $A_{ijk}$ given by
\[
 A_{ijk}=\begin{pmatrix} a_{ijk} & 0 \\
  0 & b_{ij}  \end{pmatrix}x+
    \begin{pmatrix} c_{ijk} \\ d_{ij}
  \end{pmatrix}
\]
for $(j,k)\in \mathcal{J}_i$, where
\begin{align*}
 \mathcal{J}_i=\{(j,k):\; 1\le j\le m_{i},\; 1\le k\le m_{ij} \}
\end{align*}
is a finite index set. We assume $0<a_{ijk}\le b_{ij}<1$,
$\sum_{j=1}^{m_i} b_{ij} \le 1$ and $\sum_{k=1}^{m_{ij}} a_{ijk} \le 1$.
Also, $0\le d_{i1}<d_{i2}<...<d_{im_i}<1$ with $d_{ij+1}-d_{ij}\ge b_{ij}$ and $1-d_{im_i}\ge b_{im_i}$, and
$c_{ij1}<c_{ij2}<...<c_{ijm_{ij}}<1$ with $c_{ij(k+1)}-c_{ijk}\ge a_{ijk}$
and $1-c_{ijm_{ij}}\ge a_{ijm_{ij}}$. These hypotheses guarantee that the
rectangles
\[
     R_{ijk}=A_{ijk}([0,1]^2)
\]
have interiors that are pairwise disjoint, with edges parallel to
the $x$- and $y$-axes, are arranged in ``rows''  of height $b_{ij}$ and have width $a_{ijk}$.
Geometrically, $\Lambda$ is the limit (in the Hausdorff metric), or the intersection, of
\emph{$n$-approximations}: the 1-approximation consisting of the rectangles $R_{ijk}$, the
2-approximation consisting in replacing each rectangle of the 1-approximation by an affine copy of the
2-approximation, and so on. Formally,
\[
   \Lambda= \bigcap_{n=1}^\infty \bigcup_{(j_1,k_1),...,(j_n,k_n)\in\mathcal{J}_{i}}
   A_{ij_1k_1}\circ\cdots\circ A_{ij_nk_n}([0,1]^2).
\]

Now we want to give a random version of this construction in such a way that at each step of the approximation we are
allowed to change the number $i\in\{1,...,m\}$. More precisely, given ${\bf i}=(i_1, i_2,...)\in\mathcal{I}$, we consider
the \emph{random set} given by
\[
   \Lambda_{\bf i}= \bigcap_{n=1}^\infty \bigcup_{(j_1,k_1)\in\mathcal{J}_{i_1},...,(j_n,k_n)\in\mathcal{J}_{i_n}}
   A_{i_1j_1k_1}\circ\cdots\circ A_{i_nj_nk_n}([0,1]^2).
\]
Of course this construction generalizes the previous one by putting ${\bf{i}}=(i,i,...)$.

We will need the following \emph{generic} hypothesis on the numbers $a_{ijk}$. For each $t\in [0,1]$,
there exist $1\le i\le m$ and $1\le j<j'\le m_i$ such that
\begin{equation}\label{hip}
  \sum_{k=1}^{m_{ij}} a_{ijk}^t \ne \sum_{k=1}^{m_{ij'}} a_{ij'k}^t.
\end{equation}
We will also need one of the following \emph{robust} hypotheses. Let $\varepsilon>0$. For each $i\in\{1,...,m\}$,
there exist numbers $0<a_i\le b_i<1$ such that
\begin{equation}\label{hiptec1}
  (1+\varepsilon)^{-1}<\frac{b_{ij}}{b_{i}}< 1+\varepsilon,\quad (1+\varepsilon)^{-1}<\frac{a_{ijk}}{a_{i}}< 1+\varepsilon,
\end{equation}
or
\begin{equation}\label{hiptec2}
  \frac{b_{ij}}{a_{ijk}}< 1+\varepsilon,
\end{equation}
for all $(j,k)\in\mathcal{J}_i$.

\emph{Notation}: $\hd\Lambda$ stands for the Hausdorff dimension of a set $\Lambda$.

\begin{theorem A}
There exists $\varepsilon>0$ such that if (\ref{hip}) and (\ref{hiptec1}) or (\ref{hip}) and (\ref{hiptec2})
are satisfied then
\begin{equation} \label{form}
 \hd \Lambda_{\bf i}=\sup_{\bf{P}} \left\{ \lambda({\bf{P}})+ t({\bf{P}}) \right\} \quad \text{for ${\bf p}$-a.e. } \bf{i}.
\end{equation}
where ${\bf P}=(p_{ij})$ is a collection of non-negative numbers satisfying
\begin{align*}
  \sum_{j=1}^{m_i} p_{ij}=p_i, \quad i=1,...,m,
\end{align*}
the number $\lambda({\bf{P}})$ is given by
\[
  \lambda({\bf{P}})=\frac{\sum_{i,j} p_{ij} \log p_{ij}-\sum_i p_{i} \log p_{i}}
  {\sum_{i,j} p_{ij}\log b_{ij}},
\]
(by convention: $0\log0=0$)\\
and $t(\bf{P})$ is the unique real in $[0,1]$ satisfying
\[
  \sum_{i,j} p_{ij} \log \left(\sum_{k} a_{ijk}^{t(\bf{P})} \right)=0 .
\]
\end{theorem A}

\begin{rem}
 The number $\lambda({\bf{P}})$ is the Hausdorff dimension in $y$-axis of the
 set of generic points for the distribution ${\bf{P}}|{\bf{p}}$; the number $t(\bf{P})$ is the Hausdorff
 dimension of a typical $1$-dimensional fibre in the $x$-direction relative to the distribution
 ${\bf{P}}$, and is given by a random Moran formula. Also note that by putting $p_i=1$ for some $i$ we get the
 deterministic case, i.e. the Hausdorff dimension of self-affine Sierpi\'nski carpets (satisfying the technical hypotheses).
\end{rem}

It follows from the proof of Theorem A (see Lemma \ref{dimmedida})
that the expression between brackets in (\ref{form}) is the
Hausdorff dimension of a Bernoulli measure $\mu_{\bf{P}}$. Since
the functions ${\bf{P}}\mapsto \lambda({\bf{P}})$ and
${\bf{P}}\mapsto t({\bf{P}})$ are continuous, we obtain the
following.

\begin{corollary A} With the same hypotheses of Theorem A, there exists ${\bf{P}}^*$ such that
\[
    \hd\Lambda_{\bf i}=\hd\mu_{{\bf{P}}^*} \quad \text{for ${\bf p}$-a.e. } \bf{i} .
\]
\end{corollary A}

\begin{example}
Let $k$ be a positive integer and $0<q<1$. Divide the unit square into a grid of $k\times k$ squares with side length $k$.
Consider the random set $\Lambda_q$ constructed as before as the limit of $n$-approximations, such that at each step of
a further approximation we use a transformation that corresponds to a grid where each square has probability $q$ to belong
to this grid. This construction differs from the usual \emph{fractal percolation} in that the randomness is with respect
to the grid (the transformation) and not on each square. This construction is a particular case of ours if one
considers all the possible patterns for the selected squares of the grids. Namely, the grids consisting of $1\le l\le k^2$
squares are in the number of $\binom{k^2}{l}$ and we assign to each of these grids the probability
$a q^l (1-q)^{k^2-l}$, where $a=(1-(1-q)^{k^2})^{-1}$ appears because we are excluding the case which no square
is selected. Then,
\[
     a \sum_{l=1}^{k^2} \binom{k^2}{l} q^l(1-q)^{k^2-l}=1.
\]
Theorem A says that
\[
   \hd\Lambda_q=\frac{1}{\log k} \sup_{\bf P} \Bigl\{-\sum_{i,j} p_{ij}\log p_{ij}+
   \sum_i p_i\log p_i +\sum_{i,j}p_{ij} \log m_{ij}\Bigr\}\quad \text{a.s.}
\]
Using Lagrange multipliers with the restrictions $\sum_j p_{ij}=p_i$ we get that the supremum above is attained at the
probability vector
\[
    p_{ij}=p_i \frac{m_{ij}}{\sum_j m_{ij}}
\]
and
\[
 \hd\Lambda_q= \frac{\sum_i p_i \log \Bigl(\sum_j m_{ij} \Bigl)}{\log k}=
 \frac{a \sum_{l=1}^{k^2} \binom{k^2}{l} q^l(1-q)^{k^2-l} \log l}{\log k} \quad\text{a.s.}
\]
Note that in the above formula $\log k$ corresponds to the Lyapunov exponent and $\log l$ corresponds to the topological
entropy relative to a grid with $l$ squares, so the numerator is the average of the entropies of the transformations.
In this work we are interested in the Hausdorff dimension of random fractals in a non-conformal setting and parameterized
by real numbers.
\end{example}

\section{Basic results}

Here we mention some basic results about fractal geometry and
pointwise dimension. For proofs we refer the reader to the books
\cite{4} and \cite{11}.

We are going to define the Hausdorff dimension of a set
$F\subset\mathbb{R}^n$. The diameter of a set
$U\subset\mathbb{R}^n$ is denoted by $|U|$. If $\{U_i\}$ is a
countable collection of sets of diameter at most $\delta$ that
cover $F$, i.e. $F\subset\bigcup_{i=1}^{\infty} U_i$ with
$|U_i|\le\delta$ for each $i$, we say that $\{U_i\}$ is a
$\delta$-cover of $F$. Given $t\ge 0$, we define the
\emph{$t$-dimensional Hausdorff measure of} $F$ as
\[
  \mathcal{H}^t(F)=\lim_{\delta\to 0} \inf \left\{ \sum_{i=1}^{\infty} |U_i|^t: \{U_i\}
  \text{ is a $\delta$-cover of $F$} \right\}.
\]
It is not difficult to see that there is a critical value $t_0$
such that
\[
  \mathcal{H}^t(F)=
  \begin{cases}
    \infty &\text{ if } t<t_0 \\
    0      &\text{ if } t>t_0.
   \end{cases}
\]
We define the \emph{Hausdorff dimension} of $F$, written $\hd F$,
as being this critical value $t_0$.

Let $\mu$ be a Borel probability measure on $\mathbb{R}^n$. The
Hausdorff dimension of the measure $\mu$ was defined by L.-S.
Young as
\[
    \hd\mu=\inf\{\hd F : \mu(F)=1\}.
\]
So, by definition, one has
\[
    \hd F\ge\sup\{\hd\mu : \mu(F)=1\}.
\]
In this paper we are interested in the validity of the opposite
inequality in a dynamical context. In practice, to calculate the
Hausdorff dimension of a measure, it is useful to compute its
\emph{lower pointwise dimension}:
\[
  \underline{d}_\mu(x)=\liminf_{r\to 0} \,\frac{\log \mu(B(x,r))}{\log r},
\]
where $B(x,r)$ stands for the open ball of radius $r$ centered at
the point $x$. The relations between these dimensions are given by
the following propositions.

\begin{proposition}
\text{}
 \begin{enumerate}
  \item
   If \,$\underline{d}_\mu(x)\ge d$ for $\mu$-a.e. $x$ then $\hd \mu \ge d$.
  \item
   If \,$\underline{d}_\mu(x)\le d$ for $\mu$-a.e. $x$ then $\hd \mu \le d$.
  \item
   If \,$\underline{d}_\mu(x)=d$ for $\mu$-a.e. $x$ then $\hd \mu=d$.
 \end{enumerate}
\end{proposition}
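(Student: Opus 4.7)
Item (3) is immediate once (1) and (2) are proved, so the plan is to establish the two inequalities separately.

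For \textbf{(1)} I would run a mass-distribution argument. Fix $s<d$ and define
\[
E_n=\{x:\mu(B(x,r))\le r^s\text{ for every } 0<r<1/n\}.
\]
The hypothesis forces $\mu(\bigcup_n E_n)=1$. Given any Borel $F$ with $\mu(F)=1$, choose $n$ with $\mu(F\cap E_n)>0$; then for any $\delta$-cover $\{U_i\}$ of $F\cap E_n$ with $\delta<1/(2n)$, each $U_i$ meeting $E_n$ is contained in a ball of radius $|U_i|$ centred in $E_n$, so $\mu(U_i\cap E_n)\le|U_i|^s$. Summing, $\sum_i|U_i|^s\ge\mu(F\cap E_n)>0$, hence $\mathcal H^s(F)>0$ and $\hd F\ge s$. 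Taking the infimum over $F$ and letting $s\nearrow d$ gives $\hd\mu\ge d$.

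For \textbf{(2)} I would use Vitali's $5r$-covering lemma. Fix $s>d$; the hypothesis provides, for $\mu$-a.e.\ $x$, arbitrarily small $r$ with $\mu(B(x,r))\ge r^s$. For each $\delta>0$ set
\[
F_\delta=\{x:\mu(B(x,r))\ge r^s\text{ for some } 0<r<\delta\},
\]
which has full $\mu$-measure, and cover $F_\delta$ by the witnessing balls. The $5r$-covering lemma extracts a countable disjoint subfamily $\{B(x_i,r_i)\}$ whose $5$-fold dilates still cover $F_\delta$; then
\[
\sum_i(10r_i)^s\le 10^s\sum_i\mu(B(x_i,r_i))\le 10^s
\]
by disjointness, so $\mathcal H^s_{10\delta}(F_\delta)\le 10^s$. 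Taking $F=\bigcap_k F_{1/k}$ produces a full-measure set with $\mathcal H^s(F)\le 10^s<\infty$, whence $\hd\mu\le\hd F\le s$. Letting $s\searrow d$ completes the argument.

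The main obstacle will be the Vitali step in (2): verifying that the witnessing balls form a fine cover of $F_\delta$ and that after thinning to a disjoint subfamily the $5$-fold dilates cover \emph{all} of $F_\delta$, not merely a $\mu$-full subset. Measurability of the auxiliary sets $E_n$ and $F_\delta$ requires a brief check using joint measurability of $(x,r)\mapsto\mu(B(x,r))$, but this and the bookkeeping of constants are routine.
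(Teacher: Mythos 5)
Your argument is correct: the mass-distribution argument for (1) and the Vitali $5r$-covering argument for (2) are exactly the standard proofs of these facts, and the points you flag (measurability of $E_n$, $F_\delta$ and the coverage by the dilated balls) are indeed only routine bookkeeping. The paper itself states this proposition without proof, deferring to the books of Falconer and Pesin, where essentially this same argument appears, so there is no divergence of approach to report.
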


\begin{proposition}
 If $\underline{d}_\mu(x)\le d$ for every $x\in F$ then $\hd F\le d$.
\end{proposition}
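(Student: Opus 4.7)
The plan is to convert the pointwise-dimension hypothesis into a concrete estimate on the $\mu$-measure of small balls, then build an efficient cover of $F$ via a $5r$-covering lemma, and finally use the probability-measure bound $\mu(\mathbb{R}^n)=1$ to estimate the Hausdorff measure. I would proceed as follows.

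Fix $\varepsilon>0$ and $\delta>0$. Since $\underline{d}_\mu(x)\le d$ for every $x\in F$, for each such $x$ the definition of $\liminf$ produces arbitrarily small radii $r=r(x)\in(0,\delta)$ satisfying
\[
   \frac{\log \mu(B(x,r))}{\log r}\le d+\varepsilon.
\]
Since $r<1$ has $\log r<0$, this rearranges to $\mu(B(x,r))\ge r^{d+\varepsilon}$. I would let $\mathcal{V}$ be the collection of all such balls $B(x,r(x))$ as $x$ ranges over $F$; by construction this is a fine (Vitali-type) cover of $F$ by balls of radius $<\delta$.

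Next I would apply the $5r$-covering lemma: from $\mathcal{V}$ one can extract a countable, pairwise disjoint subfamily $\{B(x_i,r_i)\}_{i}$ such that $F\subset\bigcup_i B(x_i,5r_i)$. Since the sets $B(x_i,5r_i)$ all have diameter at most $10\delta$, they form a $10\delta$-cover of $F$, and
\[
   \sum_i (10 r_i)^{d+\varepsilon}=10^{d+\varepsilon}\sum_i r_i^{d+\varepsilon}\le 10^{d+\varepsilon}\sum_i \mu(B(x_i,r_i))\le 10^{d+\varepsilon},
\]
using the key ball-measure estimate, disjointness of the $B(x_i,r_i)$, and the fact that $\mu$ is a probability measure. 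Taking $\delta\to 0$ yields $\mathcal{H}^{d+\varepsilon}(F)\le 10^{d+\varepsilon}<\infty$, hence $\hd F\le d+\varepsilon$. Since $\varepsilon>0$ is arbitrary, $\hd F\le d$.

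There is no real obstacle; this is a standard geometric-measure-theory argument. The only point that requires care is the conversion of the $\liminf$ hypothesis into the existence of \emph{arbitrarily small} good radii at each point (so that the collection $\mathcal{V}$ is fine in the Vitali sense), and the reliance on the $5r$-covering lemma in $\mathbb{R}^n$ — which has no doubling or separability issue since balls have uniformly bounded radii. No additional structure on $F$ (such as measurability) is needed because the covering argument bounds $\mathcal{H}^{d+\varepsilon}(F)$ directly.
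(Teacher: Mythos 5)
Your argument is correct: the conversion of the hypothesis $\underline{d}_\mu(x)\le d$ into the ball estimate $\mu(B(x,r))\ge r^{d+\varepsilon}$ for arbitrarily small $r$, followed by the $5r$-covering lemma and the bound $\sum_i\mu(B(x_i,r_i))\le 1$, is exactly the standard proof. The paper does not prove this proposition itself but defers to the books \cite{4} and \cite{11}, where essentially this same covering argument is given, so there is nothing to add.
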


\section{Proof of Theorem A}
\text{}

\emph{Part 1:} $\hd\Lambda_{\bf i}\ge\sup_{\bf{P}}\{\lambda({\bf{P}})+ t(\bf{P})\}$\\

There is a natural symbolic representation associated with our
system that we shall describe now. Given ${\bf i}=(i_1, i_2,...)$, consider the sequence space
$\Omega_{\bf i}=\prod_{n=1}^{\infty} \mathcal{J}_{i_n}$. Elements of $\Omega_{\bf i}$ will be
represented by $\omega=(\omega_1, \omega_2, ...)$ where
$\omega_n=(j_n, k_n)\in\mathcal{J}_{i_n}$. Given
$\omega\in\Omega$ and $n\in\mathbb{N}$, let $\omega(n)=(\omega_1,
\omega_2, ..., \omega_n)$ and define the \emph{cylinder of order
$n$},
\[
    C_{\omega(n)}^{\bf i}=\{\omega'\in\Omega_{\bf i}: \omega'_l=\omega_l,\,l=1,...,n\},
\]
and the \emph{basic rectangle of order $n$},
\[
 R_{\omega(n)}^{\bf i}=A_{i_1\omega_1} \circ A_{i_2\omega_2} \circ \cdots \circ A_{i_n\omega_n}([0,1]^2).
\]
We have that $(R_{\omega(n)}^{\bf i})_n$ is a decreasing sequence of
closed rectangles having edges with length $\prod_{l=1}^n b_{i_lj_l}$ and $\prod_{l=1}^n a_{i_lj_lk_l}$.
Thus $\bigcap_{n=1}^\infty R_{\omega(n)}^{\bf i}$ consists of a single point which belongs to $\Lambda_{\bf i}$ that we
denote by $\chi_{\bf i}(\omega)$. This defines a continuous and surjective
map $\chi_{\bf i}\colon \Omega_{\bf i}\to\Lambda_{\bf i}$ which is at most 4 to 1, and
only fails to be a homeomorphism when some of the rectangles $R_{ijk}$ have nonempty intersection.

We shall construct probability measures $\mu_{\bf{P}, \bf{i}}$ supported
on $\Lambda_{\bf i}$ with
\[
\hd\mu_{\bf{P}, \bf{i}}=\lambda({\bf{P}})+ t(\bf{P}) \quad \text{for ${\bf p}$-a.e. } \bf{i} .
\]
This gives what we want because $\hd\Lambda_{\bf i}\ge\hd\mu_{\bf{P}, \bf{i}}$.

Let $\tilde{\mu}_{\bf{P}, \bf{i}}$ be the Bernoulli measure on $\Omega_{\bf i}$ such that
\[
    \tilde{\mu}_{\bf{P}, \bf{i}}(C_{\omega(n)}^{\bf i})=\prod_{l=1}^n \frac{p_{i_lj_l}}{p_{i_l}}
    \,\frac{a_{i_lj_lk_l}^{t(\bf{P})}}{\sum_{k} a_{i_lj_lk}^{t(\bf{P})}}.
\]
Let $\mu_{\bf{P}, \bf{i}}$ be the probability measure on $\Lambda_{\bf i}$ which
is the pushforward of $\tilde{\mu}_{\bf{P}, \bf{i}}$ by $\chi_{\bf i}$, i.e.
$\mu_{\bf{P}, \bf{i}}=\tilde{\mu}_{\bf{P}, \bf{i}}\circ\chi_{\bf i}^{-1}$.

For calculating the Hausdorff dimension of $\mu_{\bf{P}, \bf{i}}$ we
shall consider some special sets called \emph{approximate squares}.
Given $\omega\in\Omega_{\bf i}$ and $n\in\mathbb{N}$ such that $n\ge(\log
\min a_{ijk})/(\log \max b_{ij})$, define
\begin{equation}\label{ln}
 L_n(\omega)=\max\left\{ k\ge1: \prod_{l=1}^n b_{i_lj_l}\le
 \prod_{l=1}^k a_{i_lj_lk_l}\right\}
\end{equation}
and the \emph{approximate square}
\[
B_n(\omega)=\left\{
    \omega'\in\Omega_{\bf i}:\:\, j_{l}'=j_l,\: l=1,...,n \text{ and }
     k_{l}'=k_l,\: l=1,...,L_n(\omega)  \right\}.
\]
We have that each approximate square $B_n(\omega)$ is a finite union
of cylinder sets, and that approximate squares are \emph{nested},
i.e., given two, say $B_n(\omega)$ and $B_{n'}(\omega')$, either
$B_n(\omega)\cap B_{n'}(\omega')=\emptyset$ or $B_n(\omega)\subset
B_{n'}(\omega')$ or $B_{n'}(\omega')\subset B_n(\omega)$.
Moreover, $\chi_{\bf i}(B_n(\omega))=\tilde{B}_n(\omega)\cap \Lambda_{\bf i}$
where $\tilde{B}_n(\omega)$ is a closed rectangle in $\mathbb{R}^2$ with
edges parallel to the coordinate axes, with vertical length $\prod_{l=1}^{n} b_{i_lj_l}$ and horizontal length
$\prod_{l=1}^{L_n(\omega)} a_{i_lj_lk_l}$. By
(\ref{ln}),
\begin{equation}\label{Q}
  1\le \frac{\underset{l=1}{\overset{L_n(\omega)}{\prod}} a_{i_lj_lk_l}}
  {\underset{l=1}{\overset{n}{\prod}} b_{i_lj_l}}\le \max\,a_{ijk}^{-1},
\end{equation}
hence the term ``approximate square''. It follows
from (\ref{Q}) that
\begin{equation}\label{factor0}
  \frac{\sum_{l=1}^{L_n(\omega)} \log a_{i_lj_lk_l}}{\sum_{l=1}^n \log b_{i_lj_l}}
  =1+\frac{1}{n} \frac{\sum_{l=1}^{L_n(\omega)} \log a_{i_lj_lk_l}-\sum_{l=1}^{n}
  \log b_{i_lj_l}} {\frac{1}{n} \sum_{l=1}^{n}\log b_{i_lj_l}}\to 1 .
\end{equation}
Also observe that $L_n(\omega)\le n$ and $L_n(\omega)\to\infty$ as $n\to\infty$.

\begin{lemma}\label{dimmedida}
 $\hd \mu_{\bf{P}, \bf{i}}=\lambda({\bf{P}})+ t(\bf{P}) \quad \text{for ${\bf p}$-a.e. } \bf{i}$.
\end{lemma}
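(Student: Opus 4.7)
The plan is to apply Proposition 1(3): show that the lower pointwise dimension $\underline{d}_{\mu_{\bf P,\bf i}}(x) = \lambda({\bf P}) + t({\bf P})$ for $\mu_{\bf P,\bf i}$-a.e.\ $x$, for ${\bf p}$-a.e.\ ${\bf i}$. Since by (\ref{Q}) the approximate squares $\tilde B_n(\omega)$ are genuinely square-shaped (their two side lengths being commensurable up to a system-dependent constant), they serve as geometric proxies for Euclidean balls at scale $\prod_{l=1}^n b_{i_l j_l}$.

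First I compute $\tilde\mu_{\bf P,\bf i}(B_n(\omega))$: since $B_n(\omega)$ fixes $j_l$ for $l\le n$ and $k_l$ only for $l\le L_n(\omega)$, summing the cylinder formula over the free coordinates $k_{L_n+1},\dots,k_n$ and telescoping each factor to $1$ gives
\[
\tilde\mu_{\bf P,\bf i}(B_n(\omega)) = \prod_{l=1}^n \frac{p_{i_l j_l}}{p_{i_l}} \prod_{l=1}^{L_n(\omega)} \frac{a_{i_l j_l k_l}^{t({\bf P})}}{\sum_k a_{i_l j_l k}^{t({\bf P})}} .
\]
Next I apply the strong law of large numbers. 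Under the joint law on $({\bf i},\omega)$, the sequence $(i_l,j_l,k_l)$ is i.i.d.\ with marginal $p_{ij}\cdot a_{ijk}^{t({\bf P})}/\sum_{k'}a_{ijk'}^{t({\bf P})}$. Taking logarithms in the display above and dividing by $\sum_{l=1}^n \log b_{i_l j_l}$ (which, by (\ref{Q}), differs from $\log|\tilde B_n(\omega)|$ by a bounded amount), the first factor yields
\[
\frac{\sum_{l=1}^n \log(p_{i_l j_l}/p_{i_l})}{\sum_{l=1}^n \log b_{i_l j_l}} \to \frac{\sum_{i,j} p_{ij}\log p_{ij} - \sum_i p_i\log p_i}{\sum_{i,j} p_{ij}\log b_{ij}} = \lambda({\bf P}),
\]
while (\ref{factor0}) directly gives $t({\bf P})\sum_{l=1}^{L_n}\log a_{i_l j_l k_l}\big/\sum_{l=1}^n \log b_{i_l j_l}\to t({\bf P})$. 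For the third term, the SLLN combined with the defining equation $\sum p_{ij}\log\sum_k a_{ijk}^{t({\bf P})}=0$ yields $L_n^{-1}\sum_{l=1}^{L_n}\log\sum_k a_{i_l j_l k}^{t({\bf P})}\to 0$, and since $L_n/n$ is bounded this term is $o(n)$ and so contributes nothing to the ratio. Adding up, $\log\tilde\mu_{\bf P,\bf i}(B_n(\omega))/\log|\tilde B_n(\omega)|\to \lambda({\bf P})+t({\bf P})$ almost surely.

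It remains to convert this limit along approximate squares into a statement about the pointwise dimension $\underline d_{\mu_{\bf P,\bf i}}(\chi_{\bf i}(\omega))$. For small $r$ pick $n=n(r)$ with $|\tilde B_n(\omega)|\le r<|\tilde B_{n-1}(\omega)|$; since $\tilde B_n(\omega)\subset B(\chi_{\bf i}(\omega),r)$ one gets the easy direction $\underline d_\mu(x)\le \lambda({\bf P})+t({\bf P})$. The main obstacle is the reverse inequality: a ball $B(x,r)$ meets only a bounded number (depending only on the $A_{ijk}$) of distinct level-$n(r)$ approximate squares, and one invokes Egorov's theorem on the convergence above to pass to a set of $\tilde\mu_{\bf P,\bf i}$-measure arbitrarily close to $1$ on which it is uniform; then $\mu_{\bf P,\bf i}(B(x,r))$ can be bounded by a constant times $r^{\lambda({\bf P})+t({\bf P})-\varepsilon}$ for every small $r$. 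Keeping track of which representative approximate squares lie in the good set, together with the at-most-$4$-to-$1$ nature of $\chi_{\bf i}$ that allows one to pass between symbolic cylinders and their geometric images, is the only delicate bookkeeping in the argument.
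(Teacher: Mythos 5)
Your argument is correct and follows the paper's proof essentially verbatim: the same explicit computation of $\tilde{\mu}_{\bf{P},\bf{i}}(B_n(\omega))$, the same decomposition of the log-ratio into an entropy term, a $t({\bf{P}})$ term controlled by (\ref{factor0}), and a vanishing term killed by the defining equation of $t({\bf{P}})$, all handled by the strong law of large numbers for the i.i.d.\ sequence $(i_l,j_l,k_l)$ under the joint law. The only cosmetic difference is that the paper delegates the passage from approximate squares to genuine balls to [11, Theorem 15.3], where you instead sketch the underlying bounded-multiplicity/Egorov argument.
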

\begin{proof}
To calculate the Hausdorff dimension of $\mu_{\bf{P}, \bf{i}}$ we are
going calculate its pointwise dimension and use Proposition 1.
Remember that $\chi_{\bf i}(B_n(\omega))=\tilde{B}_n(\omega)\cap \Lambda_{\bf i}$
where, by (\ref{Q}), $\tilde{B}_n(\omega)$ is ``approximately'' a
ball in $\mathbb{R}^2$ with radius $\prod_{l=1}^n b_{i_lj_l}$, and
that
\[
   \mu_{\bf{P}, \bf{i}}(\tilde{B}_n(\omega))=\tilde{\mu}_{\bf{P}, \bf{i}}(B_n(\omega)).
\]
Also, $\chi_{\bf i}$ is at most $4$ to 1. Taking this into account, by
Proposition 1 together with [11, Theorem 15.3], one is left to prove that
\[
    \lim_{n\to\infty} \,\frac{\log \tilde{\mu}_{\bf{P}, \bf{i}}(B_n(\omega))}{\sum_{l=1}^n \log b_{i_lj_l}}
    =\lambda({\bf{P}})+ t(\bf{P}) \, \text{ for $\tilde{\mu}_{\bf{P},\bf{i}}$-a.e. $\omega$ and
    ${\bf p}$-a.e. $\bf{i}$}.
\]
It follows from the definition of ${\tilde\mu}_{\bf{P},\bf{i}}$ that, for
${\tilde\mu}_{\bf{P},\bf{i}}$-a.e $\omega$, $p_{i_lj_l}>0$ for
every $l$, so we may restrict our attention to these $\omega$. We have that
\[
  \tilde{\mu}_{\bf{P},\bf{i}}(B_n(\omega))=\prod_{l=1}^n \frac{p_{i_lj_l}}{p_{i_l}}
  \,\prod_{l=1}^{L_n(\omega)}\frac{a_{i_lj_lk_l}^{t(\bf{P})}} {\sum_{k} a_{i_lj_lk}^{t(\bf{P})}}
\]
and
\begin{align*}
  \frac{\log \tilde{\mu}_{\bf{P},\bf{i}}(B_n(\omega))}{\sum_{l=1}^n \log b_{i_lj_l}}&=
  \frac{\sum_{l=1}^n \log \frac{p_{i_lj_l}}{p_{i_l}}} {\sum_{l=1}^n \log b_{i_lj_l}} +
  t({\bf{P}}) \, \frac{\sum_{l=1}^{L_n(\omega)} \log a_{i_lj_lk_l}}
   {\sum_{l=1}^{n} \log b_{i_lj_l}}\\
   &\quad-\frac{\frac{1}{L_n(\omega)}
   \sum_{l=1}^{L_n(\omega)} \log \Bigr(\sum_{k} a_{i_lj_lk}^{t(\bf{P})}
   \Bigl)}{\frac{n}{L_n(\omega)}\frac{1}{n}\sum_{l=1}^{n} \log b_{i_lj_l}}\\
   &= \frac{\alpha_n}{\beta_n}\, + \,t({\bf{P}})  \,  \gamma_n \, - \,\frac{\overset{\,}{\delta_n}}{\theta_n} \, .
\end{align*}
That $\gamma_n\to 1$ follows from (\ref{factor0}). Now we can write
\[
  \alpha_n = \sum_{i,j} \frac{P(\omega,n,(i,j))}{n}\,\log \frac{p_{ij}}{p_i},
\]
where
\[
  P(\omega,n, (i,j))=\sharp \{1\le l\le n: (i_l,j_l)=(i,j)\} .
\]
By Kolmogorov's Strong Law of Large Numbers (KSLLN),
\[
   \frac{P(\omega,n,(i,j)}{n}\to p_{ij} \, \text{ for ${\bf{P}}$-a.e. $\omega$},
\]
so
\[
  \alpha_n \to \sum_{i,j} p_{ij}\log p_{ij} - \sum_i p_i \log p_i \,\text{ for $\bf{P}$-a.e. $\omega$}.
\]
In the same way,
\[
  \beta_n \to \sum_{i,j} p_{ij} \log b_{ij} \,\text{ for $\bf{P}$-a.e. $\omega$},
\]
and, by the definition of $t(\bf{P})$,
\[
   \delta_n\to 0\,\text{ for ${\bf{P}}$-a.e. $\omega$}.
\]
Since $n/L_n^{d-1}(\omega)\ge 1$, we have that $| \theta_n | \ge \log \,(\min\, b_{ij}^{-1})>0$, so we also have that
\[
   \frac{\delta_n}{\theta_n} \to 0\,\text{ for $\bf{P}$-a.e. $\omega$},
\]
thus completing the proof.
\end{proof}
As noticed in the beginning of this Part, these lemmas imply
\[
   \hd\Lambda_{\bf i}\ge\sup_{\bf{P}} \left\{ \lambda({\bf{P}})+t(\bf{P})\right\} \quad \text{for ${\bf p}$-a.e. } \bf{i}.
\]
\text{}

\emph{Part 2:} $\hd\Lambda_{\bf i}\le\sup_{\bf{P}}\{\lambda({\bf{P}})+ t(\bf{P})\}$\\

Let $\underline{t}=\min_{{\bf{P}}} t({\bf{P}})$ and
$\overline{t}=\max_{{\bf{P}}} t({\bf{P}})$. Also let $\mathcal{P}$ be the space of probability vectors ${\bf{P}}=(p_{ij})$
as before (projecting onto $\bf{p}$) such that $p_{ij}>0$ for all $(i,j)$.

\begin{lemma}\label{lemapij}
Given $t\in (\underline{t}, \overline{t})$, there exists a probability vector ${\bf{P}}={\bf{P}}(t)\in\mathcal{P}$,
continuously varying, such that $t({\bf{P}})=t$ and
\begin{equation*}
  {p}_{ij}= p_i\, b_{ij}^{\lambda({\bf{P}})} \Bigl(\sum_{k} a_{ijk}^{t}\Bigr)^{\alpha}\,
  \Bigl(\sum_j b_{ij}^{\lambda({\bf{P}})} \Bigl(\sum_{k} a_{ijk}^{t}\Bigr)^{\alpha}\Bigr)^{-1},
\end{equation*}
where $\alpha=\alpha(t)\in\mathbb{R}$ is $\mathrm{C}^1$.
Moreover, $d \alpha / dt >0$ whenever $\alpha\in [0,1]$, and $\alpha(t)\to-\infty$ when $t\to\underline{t}$ and
$\alpha(t)\to\infty$ when $t\to\overline{t}$.
\end{lemma}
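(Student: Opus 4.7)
The formula is the Lagrange condition for the constrained maximization $\max_{{\bf P}\in\mathcal{P}}\lambda({\bf P})$ subject to $t({\bf P})=t$, with $\alpha$ being (a rescaling of) the Lagrange multiplier for the $t$-constraint. Accordingly, for $(\alpha,\lambda,t)\in\mathbb{R}^3$, I would introduce the parametric Gibbs-type family
\[
 p_{ij}(\alpha,\lambda,t)=p_i\,\frac{b_{ij}^{\lambda}\bigl(\sum_k a_{ijk}^t\bigr)^{\alpha}}{Z_i(\alpha,\lambda,t)},\qquad Z_i(\alpha,\lambda,t)=\sum_j b_{ij}^{\lambda}\Bigl(\sum_k a_{ijk}^t\Bigr)^{\alpha},
\]
(which automatically projects onto ${\bf p}$) and observe, via the identity $\log p_{ij}=\log p_i+\lambda\log b_{ij}+\alpha\log(\sum_k a_{ijk}^t)-\log Z_i$, that the conditions $\lambda({\bf P})=\lambda$ and $t({\bf P})=t$ are together equivalent to
\[
 \Phi(\alpha,\lambda,t):=\sum_i p_i\log Z_i=0,\qquad G(\alpha,\lambda,t):=\sum_{ij}p_{ij}\log\Bigl(\sum_k a_{ijk}^t\Bigr)=0.
\]
The lemma thus reduces to solving this $2\times 2$ system.

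I would first solve $\Phi=0$ for $\lambda$: since $\partial_\lambda\Phi=\sum_{ij}p_{ij}\log b_{ij}<0$ and $\Phi\to\mp\infty$ as $\lambda\to\pm\infty$, the implicit function theorem yields a unique $C^1$ function $\lambda=\lambda(\alpha,t)$. Writing $G(\alpha,t)=G(\alpha,\lambda(\alpha,t),t)$ and $\phi_{ij}(t)=\log\sum_k a_{ijk}^t$, differentiation of $\Phi=0$ in $\alpha$ gives $\partial_\alpha\lambda=-G/\partial_\lambda\Phi$, so $\partial_\alpha\lambda=0$ whenever $G=0$, and a short computation then produces
\[
 \frac{\partial G}{\partial\alpha}\bigg|_{G=0}=\sum_i p_i\sum_j\frac{p_{ij}}{p_i}\bigl(\phi_{ij}(t)-\bar\phi_i\bigr)^2,\qquad \bar\phi_i:=\sum_j\frac{p_{ij}}{p_i}\phi_{ij}(t),
\]
which is strictly positive by the generic hypothesis (\ref{hip}) (furnishing $j\ne j'$ with $\phi_{ij}\ne\phi_{ij'}$) together with $p_i>0$. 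For the existence of $\alpha(t)$ the cleanest route is the maximum principle: $\lambda({\bf P})$ attains its supremum on the compact set $\{{\bf P}\in\overline{\mathcal{P}}:t({\bf P})=t\}$ at some ${\bf P}^*$, and a perturbation argument (increasing any vanishing $p_{ij}$ from $0$ to $\epsilon$ yields a conditional-entropy gain of order $\epsilon|\log\epsilon|$, dominating the first-order cost of restoring the $t$-constraint) shows ${\bf P}^*\in\mathcal{P}$; the Lagrange conditions at ${\bf P}^*$ then force the Gibbs form for some $(\alpha,\lambda)$. Combined with $\partial_\alpha G>0$ at zeros, this gives uniqueness of $\alpha(t)$ and its $C^1$-dependence on $t$ via the implicit function theorem. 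The limits $\alpha(t)\to\pm\infty$ at the endpoints follow from a compactness argument: a bounded subsequential limit of $\bigl(\alpha(t_n),\lambda(\alpha(t_n),t_n)\bigr)$ as $t_n\to\overline t$ would yield ${\bf P}^*\in\mathcal{P}$ with $t({\bf P}^*)=\overline t$, contradicting that $\overline t$ is only attained on $\partial\mathcal{P}$.

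The main obstacle is the monotonicity claim $d\alpha/dt>0$ when $\alpha\in[0,1]$. By the implicit function theorem $d\alpha/dt=-\partial_tG/\partial_\alpha G$, and since $\partial_\alpha G>0$ is already established, the task reduces to showing $\partial_tG<0$ in that range. Computing $\partial_tG$ at $G=0$ using
\[
 \partial_t\lambda=-\alpha\,\frac{\sum_{ij}p_{ij}\phi'_{ij}}{\sum_{ij}p_{ij}\log b_{ij}}
\]
(obtained by differentiating $\Phi=0$ in $t$, with $\phi'_{ij}=\partial_t\phi_{ij}<0$) yields an expression of the form
\[
 \frac{\partial G}{\partial t}\bigg|_{G=0}=\sum_{ij}p_{ij}\phi'_{ij}+\alpha\cdot(\text{covariance terms}),
\]
whose leading summand is strictly negative. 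The delicate step is bounding the covariance contribution so the overall sign stays negative precisely on the range $\alpha\in[0,1]$; this is the only point in the argument requiring genuine analytic work beyond bookkeeping with the implicit function theorem.
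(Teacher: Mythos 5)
Your setup coincides with the paper's: the same Gibbs family $p_{ij}(\alpha,\lambda,t)$, the same reduction to a two-equation system (your $\Phi$ and $G$ are the paper's $G$ and $F$ respectively, with the order of elimination swapped), the same observation that $\partial_\lambda\Phi=\sum_{i,j}p_{ij}\log b_{ij}<0$, and the same Cauchy--Schwarz/variance argument, powered by hypothesis (\ref{hip}), for strict positivity of the $\alpha$-derivative of the $t$-constraint. Your existence argument is genuinely different: you maximize $\lambda({\bf P})$ on the slice $\{t({\bf P})=t\}$ and use an $\epsilon|\log\epsilon|$ entropy-gain perturbation to push the maximizer into the interior, whereas the paper computes the limits of $F(\alpha,\lambda,t)$ as $\alpha\to\pm\infty$ and uses the characterization of $\underline{t}$ and $\overline{t}$ as the solutions of $\sum_i p_i\log\bigl(\min_j\sum_k a_{ijk}^t\bigr)=0$ and $\sum_i p_i\log\bigl(\max_j\sum_k a_{ijk}^t\bigr)=0$; your route is workable but the Lagrange bookkeeping for the ratio $\lambda({\bf P})$ and the interiority argument would need to be written out.

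The genuine gap is the monotonicity claim $d\alpha/dt>0$ for $\alpha\in[0,1]$. You correctly reduce it to a sign condition, isolate the negative leading term $\sum_{i,j}p_{ij}\phi'_{ij}$ plus $\alpha$-weighted covariance corrections, and then declare the bounding of those corrections to be ``the delicate step'' --- but you never invoke the robust hypotheses (\ref{hiptec1}) or (\ref{hiptec2}), and without them the step cannot close: the covariance terms are not small in general, and the paper's own remark after the lemma states that (\ref{hiptec1})/(\ref{hiptec2}) are used precisely here (their necessity is left open). This is also why Theorem A begins ``there exists $\varepsilon>0$ such that\dots''. The paper's argument is a perturbative one: under (\ref{hiptec1}), as $\varepsilon\to0$ the maps $j\mapsto\phi'_{ij}$ and $j\mapsto\log b_{ij}$ become constant for each $i$, so every covariance term vanishes, $\partial F/\partial t\to\sum_i p_i\log a_i<0$, and $\partial F/\partial\lambda\to0$ while $\partial\lambda/\partial t$ stays bounded; under (\ref{hiptec2}), $\phi'_{ij}\to\log b_{ij}$ produces an exact cancellation leaving $d\alpha/dt\to-\bigl(\partial F/\partial\alpha\bigr)^{-1}\sum_{i,j}p_{ij}\log b_{ij}>0$. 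The restriction $\alpha\in[0,1]$ is exactly what keeps the $O(\varepsilon)$ errors, which carry factors of $\alpha$, uniformly under control. Your proposal needs this perturbative mechanism (or a substitute) to be a proof.
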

\begin{proof}
Given $\alpha,\lambda\in\mathbb{R}$ and $t\in(\underline{t}, \overline{t})$, we define a
probability vector ${\bf{P}}(\alpha,\lambda,t)\in\mathcal{P}$ by
\begin{equation}\label{pij0}
  p_{ij}(\alpha,\lambda,t)=p_i\, b_{ij}^{\lambda}\, \Bigl(\sum_{k} a_{ijk}^{t}\Bigr)^{\alpha}\,
  \gamma_i(\alpha,\lambda,t)^{-1}
\end{equation}
where
\[
  \gamma_i(\alpha,\lambda,t)= \sum_j b_{ij}^{\lambda}\Bigl(\sum_{k} a_{ijk}^{t}\Bigr)^{\alpha}.
\]

Let $F$ be the continuous function defined by
\begin{equation}\label{F}
   F(\alpha,\lambda,t)=\sum_{i,j} p_{ij}(\alpha,\lambda,t)  \log\Bigl(\sum_k a_{ijk}^t\Bigr).
\end{equation}
We are going to prove there exists a unique $\alpha=\alpha(\lambda,t)$, continuously varying,
such that $F(\alpha,\lambda,t)=0$, i.e. $t({\bf{P}}(\alpha,\lambda,t))=t$.

\emph{Uniqueness.} We have that,
\[
  \frac{\partial p_{ij}}{\partial \alpha}=\log\Bigl(\sum_j a_{ijk}^t\Bigr) p_{ij}
  -\frac{1}{\gamma_i} \frac{\partial \gamma_i}{\partial \alpha} p_{ij}.
\]
Also,
\begin{equation}\label{derivative2}
 \frac{1}{\gamma_i} \frac{\partial \gamma_{i}}{\partial \alpha}=
 \sum_j \frac{p_{ij}}{p_i} \log\Bigl(\sum_k a_{ijk}^t\Bigr).
\end{equation}
So,
\begin{align}
 \frac{\partial F}{\partial \alpha}&=\sum_{i,j} \frac{\partial p_{ij}}{\partial \alpha}
   \log\Bigl(\sum_k a_{ijk}^t\Bigr)\notag\\
   &=\sum_i p_i \left\{ \sum_j \frac{p_{ij}}{p_i} \Bigl(\log\Bigl(\sum_k a_{ijk}^t\Bigr)\Bigr)^2
     -\Bigl(\sum_j \frac{p_{ij}}{p_i} \log\Bigl(\sum_k a_{ijk}^t\Bigr)\Bigr)^2\right\}. \label{dfa}
\end{align}
By the Cauchy-Schwarz inequality we have that the expression
between curly brackets is non-negative and and is positive if there exists $i\in\{1,...,m\}$ such that the function
\[
   j\mapsto \sum_k a_{ijk}^t
\]
is non-constant (note that ${\bf{P}}\in\mathcal{P}$). This is
guaranteed by hypothesis (\ref{hip}). Thus $\partial F/\partial \alpha>0$.

\emph{Existence.} For fixed $(\lambda,t)$, we will look at the limit distributions of
${\bf{P}}(\alpha)={\bf{P}}(\alpha,\lambda,t)$ as $\alpha$ goes to $+\infty$ and $-\infty$.
We see that $\underline{t}$ and $\overline{t}$ are the unique solutions of the following equations, respectively,
\begin{align}
 &\sum_i p_i \log \Bigl( \min_j \sum_k a_{ijk}^{t} \Bigr)=0 \notag\\
 &\sum_i p_i \log \Bigl( \max_j \sum_k a_{ijk}^{t} \Bigr)=0 \label{exist0}.
\end{align}
For instance, if $t_*$ is the solution of (\ref{exist0}) then $t_*=t(\bf{P})$ where
$p_{ij}=p_i \delta_{ij(i)}$ where the maximum appearing in (\ref{exist0}) is attained at $j(i)$,
so $\overline{t}\ge t_*$. On the other hand, for all $\bf{P}$
\[
   \sum_i p_i \sum_j \frac{p_{ij}}{p_i} \log \Bigl( \sum_k a_{ijk}^{t_*} \Bigr)\le
   \sum p_i \log \Bigl( \max_j \sum_k a_{ijk}^{t_*} \Bigr)=0
\]
which implies that $t({\bf{P}})\le t_*$, and so $\overline{t}\le t_*$. Now, for $t\in (\underline{t}, \overline{t})$,
let
\[
    A_{i,t}=\max_{j} \sum_k a_{ijk}^t.
\]
Then
\begin{equation}\label{exist1}
\sum_{i,j} p_{ij}(\alpha)\log\Bigl(\sum_k a_{ijk}^t\Bigr)
  \underset{\alpha\to\infty}{\longrightarrow}\sum_i p_i\log A_{i,t} >0.
\end{equation}
In the same way, defining
\[
    B_{i,t}=\min_{j} \sum_k a_{ijk}^t,
\]
we have
\begin{equation}\label{exist2}
\sum_{i,j} p_{ij}(\alpha)\log\Bigl(\sum_k a_{ijk}^t\Bigr)
  \underset{\alpha\to-\infty}{\longrightarrow}\sum_i p_i\log B_{i,t}<0.
\end{equation}
By (\ref{exist1}), (\ref{exist2}) and continuity, there exists
$\alpha\in\mathbb{R}$ such that
$F(\alpha,\lambda, t)=0$. The continuity of
$\alpha(\lambda, t)$ follows from the uniqueness
part and the implicit function theorem. Actually, since
$F(\alpha,\lambda,t)$ is continuously
differentiable, so is $\alpha(\lambda,t)$.
Observe that
$t({\bf{P}})=\overline{t}\Rightarrow
{\bf{P}}\in\partial\mathcal{P}$ (in this lemma we are assuming
$\underline{t}<\overline{t}$), so since
\[
    t({\bf{P}}(\alpha(\lambda,t)))\to \overline{t} \quad\text{when}\quad
    t\to\overline{t}
\]
then
\[
   {\bf{P}}(\alpha(\lambda,t))\to\partial\mathcal{P}
   \quad\text{when}\quad t\to\overline{t},
\]
which implies
\[
  \alpha(\lambda,t)\to\infty \quad\text{when}\quad t\to\overline{t}
\]
(this convergence is uniform in $\lambda\in [0,1]$).
In the same way we see that
\[
  \alpha(\lambda,t)\to-\infty \quad\text{when}\quad t\to\underline{t}.
\]

We use the following notation $\theta(\lambda,t)=(\alpha(\lambda,t),\lambda,t)$.
We see that
\begin{align*}
 \lambda({\bf{P}}(\theta))=\lambda-
 \frac{\sum_i p_i \log \gamma_i(\theta)}
 {\sum_{i,j} p_{ij} \log b_{ij}}.
\end{align*}
So, we are left to prove there exists
$\lambda=\lambda(t)$, continuously varying, such that
\begin{equation}\label{gama0}
    G(\theta)=\sum_i p_i\log \gamma_i(\theta)=0.
\end{equation}
We have that
\begin{align*}
  \frac{\partial}{\partial \lambda} \sum_i p_i\log \gamma_i(\theta)
  &=\sum_i p_i\frac{1}{\gamma_i(\theta)}\frac{\partial}{\partial \lambda}
  \gamma_i(\theta)\notag \\
  &=\sum_i p_i \left( \frac{1}{\gamma_i(\theta)}\frac{\partial\gamma_i}{\partial \alpha}(\theta)
  \frac{\partial\alpha}{\partial\lambda}+
  \frac{1}{\gamma_i(\theta)}\frac{\partial\gamma_i}{\partial \lambda}(\theta)\right)\notag\\
 &= \sum_i p_i \frac{1}{\gamma_i(\theta)}\frac{\partial\gamma_i}{\partial \lambda}(\theta)
\end{align*}
where we have used (\ref{derivative2}). Now
\begin{equation}\label{gama3}
 \frac{1}{\gamma_i(\theta)}\frac{\partial\gamma_i}{\partial \lambda}(\theta)
 =\sum_j \frac{p_{ij}(\theta)}{p_i} \log b_{ij},
\end{equation}
so
\[
  \frac{\partial}{\partial \lambda} \sum_i p_i\log \gamma_i(\theta)=
  \sum_{i,j} p_{ij}(\theta)\log b_{ij}\le\max_{i,j} \log b_{ij}<0,
\]
and, as before, by the implicit function theorem, this implies
there exists a unique $\lambda=\lambda(t)$, continuously
varying, satisfying (\ref{gama0}).

To conclude the proof we must see that $d\alpha/dt>0$ whenever $\alpha\in [0,1]$.
We have
\[
  \frac{d\alpha}{dt}=\frac{\partial\alpha}{\partial t}+\frac{\partial\alpha}{\partial \lambda}
  \frac{\partial\lambda}{\partial t}
\]
Remember the definition of $F$ from (\ref{F}). Then
\begin{equation*}
   \frac{\partial \alpha}{\partial t}=-\left(\frac{\partial F}{\partial\alpha}\right)^{-1}
   \frac{\partial F}{\partial t}
\end{equation*}
where computation shows that
\begin{align}
  &\frac{\partial F}{\partial t}=\sum_{i,j} p_{ij} \frac{\sum_k a_{ijk}^t \log a_{ijk}}{\sum_k a_{ijk}^t} \label{dft}\\
  &+ \alpha \sum_i p_i \Biggl\{ \sum_j \frac{p_{ij}}{p_i} \frac{\sum_k a_{ijk}^t \log a_{ijk}}{\sum_k a_{ijk}^t}
  \log \Bigl( \sum_k a_{ijk}^t \Bigr)\notag\\
  &\quad\quad\quad\quad\quad\quad\quad
  -\Biggl(\sum_j \frac{p_{ij}}{p_i} \frac{\sum_k a_{ijk}^t \log a_{ijk}}{\sum_k a_{ijk}^t} \Biggr)
  \Biggl( \sum_j \frac{p_{ij}}{p_i} \log \Bigl( \sum_k a_{ijk}^t \Bigr) \Biggr) \Biggr\}\notag.
\end{align}
Also
\[
   \frac{\partial \alpha}{\partial \lambda}=-\left(\frac{\partial F}{\partial\alpha}\right)^{-1}
   \frac{\partial F}{\partial \lambda}
\]
and
\begin{align}
  &\frac{\partial F}{\partial \lambda}=
  \sum_i p_i \Biggl\{ \sum_j \frac{p_{ij}}{p_i} \log b_{ij}  \log \Bigl( \sum_k a_{ijk}^t \Bigr)\label{dfl}\\
  &\quad\quad\quad\quad\quad\quad\quad-\Biggl(\sum_j \frac{p_{ij}}{p_i} \log b_{ij} \Biggr)
  \Biggl( \sum_j \frac{p_{ij}}{p_i} \log \Bigl( \sum_k a_{ijk}^t \Bigr) \Biggr) \Biggr\} \notag.
\end{align}
It follows from (\ref{gama0}) that
\begin{align*}
   \frac{\partial \lambda}{\partial t}&=-\left(\frac{\partial G}{\partial\lambda}\right)^{-1}
   \frac{\partial G}{\partial t}=-\alpha \frac{ \sum_{i,j} p_{ij} \frac{\sum_k a_{ijk}^t \log a_{ijk}}{\sum_k a_{ijk}^t} }
   {\sum_{i,j}p_{ij} \log b_{ij}}.
\end{align*}
Now using hypothesis (\ref{hiptec1}), in the limit case when $\varepsilon\to 0$ we get
\[
   \frac{\partial F}{\partial t}\to \sum_i p_i \log a_i < 0
\]
which implies $\partial \alpha/\partial t>0$ (remember that $\partial F /\partial\alpha>0$).
Also $\partial F/\partial \lambda$ and thus $\partial \alpha / \partial \lambda$ goes to 0, and
$\partial \lambda / \partial t$ is bounded. This implies $d\alpha / dt >0$ which still holds if $\varepsilon>0$
is small enough. Now using hypothesis (\ref{hiptec2}), in the limit case when $\varepsilon\to 0$ we get
\[
   \frac{d\alpha}{d t}\to -\left(\frac{\partial F}{\partial\alpha}\right)^{-1} \sum_{i,j} p_{ij} \log b_{ij}>0,
\]
which still holds if $\varepsilon>0$ is small enough.
\end{proof}

\begin{rem}
We note that hypotheses (1)-(3) were only used in the previous lemma. Namely, hypothesis (1) was used to prove that
$\partial F/\partial \alpha >0$, see (\ref{dfa}), and hypothesis (2) or (3) was used to prove that $d\alpha /d t >0$,
see (\ref{dft}) and (\ref{dfl}). Do we really need these hypotheses?
\end{rem}

Let $s=\sup_{\bf{P}}\{\lambda({\bf{P}})+t({\bf{P}})\}$.
\begin{lemma}\label{lemasup}
For ${\bf p}$-a.e. ${\bf i}$ and for every $\omega\in\Omega_{\bf i}$ there exists ${\bf{P}}\in\mathcal{P}$
such that
\[
 \liminf_{n\to\infty} \frac{\log \tilde{\mu}_{{\bf{P}}, \bf{i}}(B_n(\omega))}
 {\sum_{l=1}^n \log b_{i_lj_l}}\le s.
\]
\end{lemma}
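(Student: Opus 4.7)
The plan is to adapt the Bernoulli parameters ${\bf P}$ to the local statistics of $\omega$ along a subsequence, then invoke Lemma \ref{lemapij}. First I would fix ${\bf i}$ in the full-$\bf p$-measure set on which the usual strong laws of large numbers apply; in particular the marginal frequencies of $i_l$ converge to $p_i$ and $\frac{1}{n}\sum_{l=1}^{n}\log\gamma_{i_l}(\theta)\to\sum_i p_i\log\gamma_i(\theta)$ for any fixed parameter $\theta$. Given such ${\bf i}$ and an arbitrary $\omega\in\Omega_{\bf i}$, compactness lets me extract a subsequence $n_q$ along which the empirical distributions
\[
\hat P_n(i,j)=\tfrac{1}{n}\#\{l\le n:(i_l,j_l)=(i,j)\},\quad \hat Q_n(i,j,k)=\tfrac{1}{L_n}\#\{l\le L_n:(i_l,j_l,k_l)=(i,j,k)\}
\]
converge to $\hat P_{ij}$ and $\hat Q_{ijk}$ respectively, and $L_{n_q}/n_q\to\rho\in(0,1]$. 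Write $\hat Q_{ij}=\sum_k\hat Q_{ijk}$. The strong law forces $\hat P_i=\hat Q_i=p_i$, and (\ref{factor0}) yields the consistency $\rho\sum_{ijk}\hat Q_{ijk}\log a_{ijk}=\sum_{ij}\hat P_{ij}\log b_{ij}=:\tau<0$.

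Next I would use the one-parameter family ${\bf P}(t)={\bf P}(\theta(t))$ from Lemma \ref{lemapij}. The explicit relation $\log(p_{ij}(t)/p_i)=\lambda(t)\log b_{ij}+\alpha(t)\log(\sum_k a_{ijk}^t)-\log\gamma_i(\theta(t))$, together with (\ref{gama0}) (which makes the $\gamma$-contribution vanish asymptotically after division by $T_n$) and (\ref{factor0}), gives, after substitution into $\log\tilde\mu_{{\bf P}(t),{\bf i}}(B_n(\omega))$ and division by $T_n=\sum_{l=1}^{n}\log b_{i_l j_l}$, the limit
\[
\Psi(t)=\lambda(t)+t+\frac{1}{\tau}\sum_{ij}\bigl(\alpha(t)\hat P_{ij}-\rho\,\hat Q_{ij}\bigr)\log\Bigl(\sum_k a_{ijk}^t\Bigr)
\]
along $n_q$. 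Since $\lambda(t)+t\le s$ for every $t$ by the definition of $s$, the problem reduces to choosing $t=t^*$ so that the remaining summand is non-positive. I would pick $t^*$ with $\alpha(t^*)=\rho$, which is possible because Lemma \ref{lemapij} gives $\alpha:(\underline t,\overline t)\to\mathbb{R}$ continuous with $\alpha(t)\to\pm\infty$ at the endpoints; the summand then simplifies to $(\rho/\tau)\sum_{ij}(\hat P_{ij}-\hat Q_{ij})\log(\sum_k a_{ijk}^{t^*})$.

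The main obstacle is making this last residual non-positive, and this is where the robust hypothesis enters. Under (\ref{hiptec1}) one has $\log(\sum_k a_{ijk}^t)=\log m_{ij}+t\log a_i+O(\varepsilon)$; the $t\log a_i$ piece cancels because the $(i)$-marginals of $\hat P$ and $\hat Q$ both equal $p_i$, leaving the bounded residual $\sum_{ij}(\hat P_{ij}-\hat Q_{ij})\log m_{ij}+O(\varepsilon)$, which can be pushed across zero by a small perturbation of $t^*$ (using $d\alpha/dt>0$ on $\alpha\in[0,1]$ from Lemma \ref{lemapij}) provided $\varepsilon$ is small enough. Under (\ref{hiptec2}), $b_{ij}/a_{ijk}<1+\varepsilon$ forces $L_n/n\to 1$, hence $\rho\to 1$ and $\hat Q_{ij}\to\hat P_{ij}$ as $\varepsilon\to 0$, so the residual tends to $0$ uniformly in $t^*$. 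Once this is controlled, $\liminf_n\log\tilde\mu_{{\bf P}(t^*),{\bf i}}(B_n(\omega))/T_n\le\Psi(t^*)\le s$, finishing the lemma. The technical heart of the argument is therefore parallel to the sign analysis of $\partial F/\partial t$ and $\partial F/\partial\lambda$ carried out in the proof of Lemma \ref{lemapij} (see (\ref{dft})--(\ref{dfl})).
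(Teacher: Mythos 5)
Your reduction is the same as the paper's up to its key inequality (\ref{princ}): the decomposition of $\log\tilde{\mu}_{{\bf P}(t),{\bf i}}(B_n(\omega))/\sum_{l\le n}\log b_{i_lj_l}$, the use of (\ref{gama0}) and KSLLN to kill the $\gamma_i$ term, $\eta_n\to1$, and the identification of the leftover bracket are all correct. The gap is in how you dispose of that bracket. You extract one convergent subsequence $n_q$ \emph{in advance} and must then live with whatever limits $\hat P$, $\hat Q$, $\rho$ it produces; after setting $\alpha(t^*)=\rho$ you are left with $(\rho/\tau)\sum_{ij}(\hat P_{ij}-\hat Q_{ij})\log\bigl(\sum_k a_{ijk}^{t^*}\bigr)$, and for an arbitrary (non-generic) $\omega$ the two empirical measures, taken over the windows $[1,n_q]$ and $[1,L_{n_q}]$, can differ by an amount of order one, so this residual has no definite sign. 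Neither rescue works: under (\ref{hiptec1}) the quantity $\sum_{ij}(\hat P_{ij}-\hat Q_{ij})\log m_{ij}$ is a fixed order-one number that does not depend on $t^*$, so no small perturbation of $t^*$ can move it across zero (and an order-one perturbation destroys the cancellation $\alpha(t^*)=\rho$); under (\ref{hiptec2}) the $\varepsilon>0$ of Theorem A is fixed, so the residual is only $O(\varepsilon)$ rather than $\le 0$, which would yield $\hd\Lambda_{\bf i}\le s+C\varepsilon$, not $\le s$. Note also the paper's remark that hypotheses (\ref{hip})--(\ref{hiptec2}) are used only inside Lemma \ref{lemapij}; they are not meant to carry this step.

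What is missing is the choice of the subsequence as a function of $t$, which is the actual content of the paper's argument. Lemma \ref{calclema}, applied to $f(n)=\sum_{l=1}^{n}\log\bigl(\sum_k a_{i_lj_lk}^{t}\bigr)$ and $\alpha(n)=L_n(\omega)/n$, picks the subsequence along which $f(n)/n$ attains its $\limsup$; there one has $f(L_n)/L_n\le f(n)/n+o(1)$, hence $\frac1n\bigl(a_nf(n)-f(L_n)\bigr)\ge -o(1)$, which is (\ref{F2}). Since this subsequence, and therefore the limit of $L_n/n$ along it, depends on $t$, one cannot simply prescribe $\alpha(t)=\rho$; the paper closes the loop by showing the map $\bar t\colon[t_0,t_1]\to[t_0,t_1]$ is continuous and invoking Brouwer's fixed point theorem to find $t_*$ with $\alpha(t_*)$ equal to the limit of $L_n/n$ along its own extremal subsequence. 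Without these two ingredients --- the extremal subsequence from the calculus lemma and the fixed-point argument --- the residual cannot be controlled for arbitrary $\omega$, and your proof does not go through.
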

\begin{proof}
Fix ${\bf i}$ and $\omega\in\Omega_{\bf i}$. We use the notation
\[
   d_{{\bf{P}},{\bf i},n}(\omega)=\frac{\log \tilde{\mu}_{{\bf{P}}, {\bf i}}(B_n(\omega))}
   {\sum_{l=1}^n \log b_{i_lj_l}}.
\]
Then it follows from the proof of Lemma \ref{dimmedida} that, if ${\bf{P}}\in\mathcal{P}$,
\begin{align}
  d_{{\bf{P}},{\bf i},n}(\omega)=&
  \frac{\sum_{l=1}^{n}\log p_{i_lj_l}-\sum_{l=1}^{n}
  \log p_{i_l}}{\sum_{l=1}^{n} \log b_{i_lj_l}} \label{vert1}\\
  &+\eta_n(\omega) t({\bf{P}})-\frac{\sum_{l=1}^{L_n(\omega)}
  \log\Bigl(\sum_k a_{i_lj_lk}^{t({\bf{P}})}\Bigr)}{\sum_{l=1}^n \log b_{i_lj_l}}\notag
\end{align}
where
\[
  \eta_n(\omega)=\frac{\sum_{l=1}^{L_n(\omega)} \log a_{i_lj_lk_l}}{\sum_{l=1}^n \log b_{i_ljl}}
  \underset{n\to\infty}{\longrightarrow} 1.
\]
Given $t\in (\underline{t}, \overline{t})$,
consider the probability vector ${\bf{P}}(t)$, such that
$t({\bf{P}}(t))=t$, given by Lemma \ref{lemapij}. Applying
(\ref{vert1}) to ${\bf{P}}(t)$ we obtain
\begin{align}
  &d_{{\bf{P}}(t),{\bf i},n}(\omega)=
 \lambda({\bf{P}}(t)) +\eta_n(\omega) t\label{vert2}-
 \frac{\sum_{l=1}^n \log \gamma_{i_l}(t)}{\sum_{l=1}^n \log b_{i_lj_l}}\\
  &+\frac{\alpha(t) \sum_{l=1}^{n} \log \Bigl(\sum_k a_{i_lj_lk}^{t}\Bigr)
  -\sum_{l=1}^{L_n(\omega)} \log \Bigl(\sum_k a_{i_lj_lk}^{t}\Bigr)}
  {\sum_{l=1}^n \log b_{i_lj_l}}, \notag
\end{align}
where, by KSLLN and (\ref{gama0}),
\[
   \frac{1}{n}\sum_{l=1}^n \log \gamma_{i_l}(t)\to \sum_i p_i \log \gamma_i(t)=0\quad\text{for $\bf{p}$-a.e. ${\bf i}$}.
\]
So we must prove that there exists $t_*\in (\underline{t},
\overline{t})$ such that
\begin{equation}\label{princ}
\limsup_{n\to\infty} \frac{1}{n} \Bigl\{
  \alpha(t_*) \sum_{l=1}^{n}
  \log \Bigl(\sum_k a_{i_lj_lk}^{t_*}\Bigr)
  -\sum_{l=1}^{L_n(\omega)} \log \Bigl(\sum_k a_{i_lj_lk}^{t_*}\Bigr) \Bigr\}\ge 0.
\end{equation}

By Lemma \ref{lemapij} and the inverse function theorem, given
$a\in [0,1]$, there exists a unique
function $t(a)\in (\underline{t}, \overline{t}) $, which is
continuous, increasing in $a$ and satisfies
\begin{equation} \label{alfatro}
    \alpha(t(a))=a.
\end{equation}
Let
\[
    a_0=\liminf_{n\to\infty}\frac{L_n(\omega)}{n},\quad
    a_1=\limsup_{n\to\infty}\frac{L_n(\omega)}{n},
\]
and
\[
  t_0=t(a_0),\quad t_1=t(a_1).
\]
Let
\[
   a_n=\frac{L_n(\omega)}{n} \quad\text{and}\quad t_n=t(a_n).
\]
Then, by Lemma \ref{calclema} (and Remark \ref{remcalc}), for every $t\in [t_0, t_1]$
\begin{equation}\label{F2}
F(t)=\limsup_{n\to\infty} \frac{1}{n} \Bigl\{ a_n \sum_{l=1}^{n}
  \log \Bigl(\sum_k a_{i_lj_lk}^{t}\Bigr)
  -\sum_{l=1}^{L_n(\omega)} \log \Bigl(\sum_k a_{i_lj_lk}^{t}\Bigr) \Bigr\}\ge 0.
\end{equation}
Since, for all $(i,j)$,
\begin{equation}\label{fcont}
     t\mapsto \log \Bigl(\sum_k a_{ijk}^{t}\Bigr)
\end{equation}
are continuous functions, so is $F(t)$. By adding some
constant, we may assume the functions in (\ref{fcont}) are $\ge
1$, because by definition of $a_n$ this does not
change $F(t)$. Note that, by (\ref{alfatro}), $a_n=\alpha(t_n)$, and let $\bar{t}(t)$ be the biggest accumulation point of
$(t_n)$ for which the $\limsup$ in (\ref{F2}) is attained. The continuity of $F$ and the functions in (\ref{fcont}) imply
that $\bar{t}(t)$ is also continuous. So
\begin{align}
F(t)=\limsup_{n\to\infty} \frac{1}{n} \Bigl\{
  \alpha(\bar{t}(t)) \sum_{l=1}^{n}
  \log \Bigl(\sum_k a_{i_lj_lk}^{t}\Bigr)
  -\sum_{l=1}^{L_n(\omega)} \log \Bigl(\sum_k a_{i_lj_lk}^{t}\Bigr) \Bigr\}.\notag
\end{align}
Since
\[
  \bar{t}\colon [t_0, t_1] \to [t_0, t_1]
\]
is continuous, by Brouwer's fixed point theorem there is
$t_*\in [t_0, t_1]$ such that $\bar{t}(t_*)=t_*$, thus proving (\ref{princ}).
\end{proof}

Part 2 will be concluded in the following lemma.
\begin{lemma} For ${\bf p}$-a.e. ${\bf i}$,
\begin{equation*}
   \hd\Lambda_{\bf i}\le\sup_{\bf{P}}\{\lambda({\bf{P}})+ t(\bf{P})\}.
\end{equation*}
\end{lemma}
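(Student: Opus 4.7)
The plan is to combine Lemma \ref{lemasup} with Proposition 2 via a discretization in ${\bf P}$. Lemma \ref{lemasup} produces, for ${\bf p}$-a.e.\ ${\bf i}$ and \emph{every} $\omega\in\Omega_{\bf i}$, some ${\bf P}\in\mathcal{P}$ (depending on $\omega$) with $\liminf_n d_{{\bf P},{\bf i},n}(\omega)\le s$. Because different $\omega$'s may require different ${\bf P}$'s, Proposition 2 cannot be applied to any single measure; the bulk of the work is showing that a \emph{finite} family of ${\bf P}$'s suffices, with an arbitrarily small error.

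Inspecting the proof of Lemma \ref{lemasup}, the vector it supplies is of the form ${\bf P}(t_\ast)$ for some $t_\ast\in[t_0,t_1]\subset[\underline\tau,\overline\tau]$, where $\underline\tau=t(0),\,\overline\tau=t(1)$ in the notation of Lemma \ref{lemapij}; this interval is a compact subset of $(\underline t,\overline t)$. On $[\underline\tau,\overline\tau]$ every quantity in formula (\ref{vert1}) that depends on ${\bf P}(t)$ (the values $\log p_{ij}(t)$, the parameter $t$, and $\log\sum_k a_{ijk}^t$) is $C^1$ with bounded derivatives, while the denominator $\sum_{l=1}^n\log b_{i_lj_l}$ is of order $n$. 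It follows that $t\mapsto d_{{\bf P}(t),{\bf i},n}(\omega)$ is Lipschitz in $t$, uniformly in $(\omega,n,{\bf i})$. Fixing $\varepsilon>0$, I would choose an $\eta$-net $\tau_1,\ldots,\tau_N\in[\underline\tau,\overline\tau]$ with $\eta$ so small that this Lipschitz estimate gives $|d_{{\bf P}(\tau_k),{\bf i},n}(\omega)-d_{{\bf P}(t),{\bf i},n}(\omega)|<\varepsilon$ whenever $|t-\tau_k|<\eta$. Intersecting the (finitely many) full-measure sets of ${\bf i}$ coming from Lemma \ref{lemasup} at each $\tau_k$, one gets: for ${\bf p}$-a.e.\ ${\bf i}$ and every $\omega\in\Omega_{\bf i}$, the sets $F_k=\{\omega:\liminf_n d_{{\bf P}(\tau_k),{\bf i},n}(\omega)\le s+\varepsilon\}$ cover $\Omega_{\bf i}=\bigcup_{k=1}^N F_k$.

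Finally, I would convert this into a Hausdorff dimension bound through Proposition 2. For $\omega\in F_k$, the approximate square $\tilde B_n(\omega)$ contains $\chi_{\bf i}(\omega)$ and by (\ref{Q}) has diameter at most $C\,r_n$ with $r_n=\prod_{l=1}^n b_{i_lj_l}$; hence $B(\chi_{\bf i}(\omega),C r_n)\supset\tilde B_n(\omega)$, and together with $\mu_{{\bf P}(\tau_k),{\bf i}}(\tilde B_n(\omega))=\tilde\mu_{{\bf P}(\tau_k),{\bf i}}(B_n(\omega))$ and $\log(Cr_n)/\sum_{l=1}^n\log b_{i_lj_l}\to 1$ this gives $\underline d_{\mu_{{\bf P}(\tau_k),{\bf i}}}(\chi_{\bf i}(\omega))\le s+\varepsilon$. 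Proposition 2 then yields $\hd\chi_{\bf i}(F_k)\le s+\varepsilon$, and since $\chi_{\bf i}$ is surjective one concludes $\hd\Lambda_{\bf i}\le\max_k\hd\chi_{\bf i}(F_k)\le s+\varepsilon$; letting $\varepsilon\downarrow 0$ along a sequence (and taking a countable intersection of the associated full-measure sets of ${\bf i}$) finishes the proof. The main obstacle is precisely the uniform-in-$(\omega,n,{\bf i})$ Lipschitz continuity needed to pass from the continuum of possible ${\bf P}$'s to a finite subfamily; this is why it is crucial that $t_\ast$ in Lemma \ref{lemasup} already lies in the compact interval $[\underline\tau,\overline\tau]\subset(\underline t,\overline t)$ rather than reaching the endpoints, where ${\bf P}(t)$ degenerates to $\partial\mathcal{P}$.
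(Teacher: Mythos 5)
Your argument is correct, and it shares the paper's central compactness idea while finishing by a genuinely different mechanism. Both proofs must pass from the $\omega$-dependent vector ${\bf P}$ supplied by Lemma \ref{lemasup} to a finite family of measures: the paper does this by asserting, from compactness and continuity of ${\bf P}\mapsto t({\bf P})$, the existence of $\mu_1,\dots,\mu_{M_\delta}$ with $\mu_{{\bf P},{\bf i}}(B_n)/\mu_k(B_n)\le b^{-\delta n}$ for every ${\bf P}$ and every approximate square of order $n$ --- a multiplicative comparison which is exactly the exponential form of the uniform-in-$(\omega,n)$ additive $\varepsilon$-closeness of the exponents $d_{{\bf P}(t),{\bf i},n}$ that you derive from Lipschitz dependence on $t$ along the curve ${\bf P}(t)$ of Lemma \ref{lemapij}. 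Where you diverge is the conversion into a dimension bound. The paper never decomposes $\Lambda_{\bf i}$ and never invokes Proposition 2: it extracts (using nestedness of approximate squares) a disjoint cover by squares $B_{n(z^l)}$ satisfying $\mu_{{\bf P}^l,{\bf i}}(B_{n(z^l)})\ge|B_{n(z^l)}|^{s+\varepsilon+\delta}$ and bounds the Hausdorff sum $\sum_l|B_{n(z^l)}|^{s+\varepsilon+2\delta}$ directly by $\sqrt{2}\,\max a_{ijk}^{-1}M_\delta$, the extra $\delta$ in the exponent absorbing the $b^{-\delta n}$ comparison error. You instead partition $\Omega_{\bf i}=\bigcup_{k=1}^N F_k$ according to which net point $\tau_k$ works, convert the symbolic estimate into a bound on $\underline{d}_{\mu_{{\bf P}(\tau_k),{\bf i}}}$ at every point of $\chi_{\bf i}(F_k)$, apply Proposition 2 to each piece, and use finite stability of Hausdorff dimension. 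Your route buys a cleaner reduction to standard tools and makes explicit two points the paper leaves implicit: that the $t_*$ produced by Lemma \ref{lemasup} lies in the compact interval $[t(0),t(1)]$ where ${\bf P}(t)$ stays away from $\partial\mathcal{P}$ (this is what makes the finite net, equivalently the paper's finite family $\mu_1,\dots,\mu_{M_\delta}$, possible), and how the uniform comparison is actually obtained. The paper's direct covering argument, in exchange, yields an explicit finite bound on the $(s+\varepsilon+2\delta)$-dimensional Hausdorff measure rather than only on the dimension. The only imprecision in your write-up is the phrase about intersecting ``finitely many full-measure sets at each $\tau_k$'': Lemma \ref{lemasup} already provides a single full-measure set of ${\bf i}$ valid for all $\omega$ and all ${\bf P}(t)$ simultaneously, so no further intersection over the net is needed; this is cosmetic and does not affect the argument.
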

\begin{proof}
Let $\bf i$ be as in Lemma \ref{lemasup}. Let $\varepsilon>0$. Consider the \emph{approximate squares of order
$n$} given by $B_n(z)=\chi_{\bf i}(B_n(\omega))$ where $\omega\in \chi_{\bf i}^{-1}(z)$, $z\in\Lambda_{\bf i}$,
$n\in\mathbb{N}$.
Then it follows from Lemma \ref{lemasup} that
\begin{equation}\label{dimp}
  \forall_{z\in\Lambda_{\bf i}} \;\forall_{N\in\mathbb{N}}\;
  \exists_{n>N}\;\exists_{{\bf{P}}\in\mathcal{P}} :\,
  \frac{\log \mu_{{\bf{P}}, {\bf i}}(B_n(z))} {\log |B_n(z)|}\le s+\varepsilon.
\end{equation}
Given $\delta, \eta>0$, we shall build a cover
$\mathcal{U}_{\delta,\eta}$ of $\Lambda_{\bf i}$ by sets with diameter
$<\eta$ such that
\[
    \sum_{U\in\mathcal{U}_{\delta,\eta}} |U|^{s+\varepsilon+2\delta}\le
    \sqrt{2}\,\max a_{ijk}^{-1}\, M_\delta
\]
where $M_\delta$ is an integer depending on $\delta$ but not on
$\eta$. This implies that $\hd\Lambda_{\bf i}\le s+\varepsilon+2\delta$
which gives what we want because $\varepsilon$ and $\delta$ can be
taken arbitrarily small. Let $b=\max\,b_{ij}<1$. It is clear, using compactness arguments and the continuity of
${\bf P}\mapsto t({\bf P})$, that there exists a finite number of Bernoulli measures
$\mu_1,...,\mu_{M_\delta}$ such that
\[
      \forall_{{\bf{P}}}\; \exists_{k\in\{1,...,M_\delta\}} :\,
      \frac{\mu_{{\bf{P}}, {\bf i}}(B_n)}{\mu_k(B_n)}\le b^{-\delta n}
\]
for all approximate squares of order $n$, $B_n$. By (\ref{dimp}), we
can build a cover of $\Lambda_{\bf i}$ by approximate squares
$B_{n(z^l)},\,l=1,2,...$ that are disjoint and have diameters
$<\eta$, such that
\[
    \mu_{{\bf{P}}^l, {\bf i}}(B_{n(z^l)})\ge |B_{n(z^l)}|^{s+\varepsilon+\delta}
\]
for some probability vectors ${\bf{P}}^l$. It follows that
\begin{align*}
  \sum_l |B_{n(z^l)}|^{s+\varepsilon+2\delta} &\le \sum_l \mu_{{\bf{P}}^l, {\bf i}}
   (B_{n(z^l)})\, |B_{n(z^l)}|^\delta\\
  &\le \sum_l \mu_{k_l}(B_{n(z^l)}) \,b^{-\delta n(z^l)}
  \, \sqrt{2}\,\max a_{ijk}^{-1}\, b^{\delta n(z^l)} \\
  &\le \sqrt{2}\,\max a_{ijk}^{-1}\,\sum_{k=1}^{M_\delta} \sum_l \mu_k(B_{n(z^l)})
  \le\sqrt{2}\,\max a_{ijk}^{-1}\, M_\delta
\end{align*}
as we wish.
\end{proof}

This ends the proof of Theorem A.

\section{A calculus lemma}

\begin{lemma}\label{calclema}
Let $f\colon (0,\infty)\to\mathbb{R}$ be a Lipschitz function and $\alpha\colon (0,\infty)\to\mathbb{R}$ a positive
bounded function. Then
\begin{equation}\label{kp0}
   \limsup_{u\to\infty} \frac{1}{u} \Bigl(\alpha(u) f(u)-f(\alpha(u) u)\Bigr)\ge 0.
\end{equation}
\end{lemma}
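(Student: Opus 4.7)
The plan is to argue by contradiction. Suppose \eqref{kp0} fails: then there exist $\delta > 0$ and $N > 0$ such that
\[
    f(\alpha(u) u) \ge \alpha(u) f(u) + \delta u \quad \text{for all } u \ge N. \quad (\ast)
\]
I would derive a contradiction from this together with the Lipschitz hypothesis on $f$ and the boundedness $M := \sup \alpha < \infty$.

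The key technical input is a \emph{global} upper bound on $f$: since $f$ is Lipschitz, $L := \limsup_{v \to \infty} f(v)/v$ is finite, and for every $\varepsilon > 0$ there exists $C_\varepsilon > 0$ with
\[
    f(v) \le (L + \varepsilon) v + C_\varepsilon \quad \text{for all } v > 0.
\]
This comes from the definition of $L$ for $v$ large, and from boundedness of $f$ on bounded sets for $v$ small. A bound valid on \emph{all} of $(0,\infty)$ is crucial, because nothing prevents $\alpha(u) u$ from staying bounded or even tending to $0$.

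I would then pick a sequence $u_n \to \infty$ with $u_n \ge N$ and $f(u_n)/u_n \to L$. Applying the global upper bound at $v = \alpha(u_n) u_n$ and comparing with $(\ast)$, then dividing by $u_n$, yields
\[
    \alpha(u_n)\bigl[(L + \varepsilon) - f(u_n)/u_n\bigr] \ge \delta - C_\varepsilon / u_n.
\]
The bracket converges to $\varepsilon$ as $n \to \infty$, so for $n$ large the left-hand side is at most $M\varepsilon + o(1)$ (using $\alpha(u_n) \le M$ and positivity of the bracket), while the right-hand side tends to $\delta$. Thus $M\varepsilon \ge \delta$, which fails for $\varepsilon < \delta/M$: contradiction.

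The main obstacle is precisely the lack of a lower bound on $\alpha$: this rules out the naive strategy of iterating the map $u \mapsto \alpha(u) u$ along an orbit so as to reach points where the $\limsup$ definition of $L$ applies directly. The remedy is to do everything in one step via the global pointwise bound on $f$, which covers the cases $\alpha(u_n) u_n \to \infty$ and $\alpha(u_n) u_n$ bounded uniformly.
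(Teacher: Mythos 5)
Your proof is correct and follows essentially the same route as the paper's: the paper substitutes $g(x)=e^{-x}f(e^{x})$ and evaluates along a sequence where $g$ approaches its $\limsup$, which is exactly your choice of $u_n$ with $f(u_n)/u_n\to L$. The one added value of your write-up is the global bound $f(v)\le (L+\varepsilon)v+C_\varepsilon$ valid on all of $(0,\infty)$, which explicitly disposes of the case where $\alpha(u_n)u_n$ stays bounded --- a case the paper's one-line proof leaves implicit.
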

\begin{proof}
Let $g\colon (0,\infty)\to\mathbb{R}$ be defined by $g(x)=e^{-x} f(e^x)$. Then $g$ is bounded and we must see that
\begin{equation}\label{kp3}
 \limsup_{x\to\infty} \alpha(e^x) \Bigl( g(x)-g(x+\log \alpha(e^x))\Bigr)\ge 0.
\end{equation}
Just take a sequence $x_n\to\infty$ such that
\[
 \limsup_{x\to\infty} g(x)=\lim_{n\to\infty} g(x_n).
\]
\end{proof}

\begin{rem}\label{remcalc}
Lemma \ref{calclema} also works when the functions $f$ and $\alpha$ are defined only on the
positive integers, by extending them in a piecewise linear
fashion, if we substitute $f$ being Lipschitz by
\[
   |f(n+1)-f(n)|\le C
\]
for all $n$, for some constant $C>0$.
\end{rem}

In what follows we make some extensions of Lemma \ref{calclema} which are not used in this paper but should be useful
when one tries to extend this paper to higher dimensions (see the problem proposed at the end of this section).

The next lemma is a non-linear extension of [7, Lemma 4.1] which was used to compute the Hausdorff dimension of
multidimensional versions of general Sierpi\'nski carpets.

\begin{lemma}\label{lemakp}
Let $f_k\colon (0,\infty)\to\mathbb{R}$ be Lipschitz functions for
$k=1,2,...,r$, and suppose $\alpha_k\colon
(0,\infty)\to\mathbb{R}$ is bounded, $\mathrm{C}^1$ and there
exist positive constants $\delta, C$ such that
\begin{align}
  &\bullet \alpha_k(u)>\delta, \text{ for } u>0\label{kp1}\\
  &\bullet |u \,\alpha_k'(u)|\to 0 \text{ as } u\to\infty.    \label{kp2}
\end{align}
Then
\begin{equation}\label{kp0}
   \limsup_{u\to\infty} \frac{1}{u} \sum_{k=1}^r
   \Bigl(\alpha_k(u) f_k(u)-f_k(\alpha_k(u) u)\Bigr)\ge 0.
\end{equation}
\end{lemma}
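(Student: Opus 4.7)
The plan is to mimic the structure of the proof of Lemma \ref{calclema} but, because the pointwise trick used there fails when $r \geq 2$, to pass to an averaged (integrated) version of the inequality. First substitute $g_k(x) = e^{-x} f_k(e^x)$, which is bounded on $[x_0, \infty)$ for some $x_0$ because $f_k$ is Lipschitz. Setting $u = e^x$, $\beta_k(x) := \alpha_k(e^x)$ and $c_k(x) := \log \alpha_k(e^x)$, the inequality (\ref{kp0}) becomes
\[
  \limsup_{x \to \infty} \Phi(x) \geq 0, \qquad \Phi(x) = \sum_{k=1}^r \beta_k(x)\bigl( g_k(x) - g_k(x + c_k(x))\bigr).
\]
The hypotheses give that $c_k$ is bounded (since $\delta < \alpha_k < B$ for some $B$) and that $\beta_k'(x) = u \alpha_k'(u) \to 0$ and $c_k'(x) = u \alpha_k'(u)/\alpha_k(u) \to 0$ as $x \to \infty$ --- that is, $\beta_k$ and $c_k$ are slowly varying.

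Argue by contradiction: assume $\limsup \Phi < 0$, so there exist $\eta > 0$ and $X_0$ with $\Phi(x) \leq -\eta$ for all $x \geq X_0$, whence
\[
  \int_X^Y \Phi(x)\,dx \leq -\eta(Y-X) \qquad \text{for all } Y > X \geq X_0.
\]
For each $k$, estimate the integral of $\beta_k(x) g_k(x + c_k(x))$ by the change of variables $y = y_k(x) := x + c_k(x)$. Because $c_k'(x) \to 0$, for $x$ large this is a $C^1$ diffeomorphism with Jacobian $1 + o(1)$; because $\beta_k$ is slowly varying and $|y-x| = |c_k(x)|$ is uniformly bounded, $\beta_k(x) - \beta_k(y) = \int_y^x \beta_k'(s)\,ds = o(1)$. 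Using boundedness of $\beta_k$ and $g_k$, this yields
\[
  \int_X^Y \beta_k(x) g_k(x + c_k(x))\,dx = \int_{y_k(X)}^{y_k(Y)} \beta_k(y) g_k(y)\,dy + o(Y-X),
\]
where the $o(Y-X)$ term is uniform in $Y$ once $X$ is sufficiently large.

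Subtracting from $\int_X^Y \beta_k(x) g_k(x)\,dx$, the two integrals of $\beta_k g_k$ differ only by the endpoint pieces $\int_X^{y_k(X)} \beta_k g_k - \int_Y^{y_k(Y)} \beta_k g_k$, each over an interval of length at most $\sup|c_k|$ with bounded integrand, hence $O(1)$ uniformly in $Y$. Summing over $k$,
\[
  \Bigl| \int_X^Y \Phi(x)\,dx \Bigr| \leq C_0 + \epsilon (Y - X)
\]
whenever $X$ is large enough (depending on $\epsilon > 0$). Taking $\epsilon < \eta/2$ and then $Y - X$ sufficiently large contradicts the bound $\int_X^Y \Phi \leq -\eta(Y-X)$, finishing the proof. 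The main obstacle I anticipate is ensuring the change-of-variable error is genuinely $o(Y-X)$ rather than $O(Y-X)$; this is precisely where the hypothesis (\ref{kp2}) enters (beyond what was used in Lemma \ref{calclema}), since it is what forces $\beta_k(x) - \beta_k(x + c_k(x)) \to 0$ and $c_k'(x) \to 0$.
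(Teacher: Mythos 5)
Your proof is correct and follows essentially the same strategy as the paper: pass to $g_k(x)=e^{-x}f_k(e^x)$, integrate the expression over a long window, perform the change of variables $y=x+\log\alpha_k(e^x)$, and use (\ref{kp2}) together with the boundedness of $g_k$ and $\alpha_k$ to control the Jacobian, the drift of $\alpha_k(e^x)$, and the endpoint terms. The only difference is bookkeeping: the paper picks a window $[u,b(u)]$ with $b(u)-u=\xi(e^u)^{-1}\to\infty$ on which the total variation of $\alpha_k(e^x)$ is at most $1$, making every error $O(1)$, whereas you work on arbitrary windows and accept an $\epsilon(Y-X)+O(1)$ error; both yield the same contradiction.
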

\begin{proof}
As before and following \cite{7}, we define $g_k\colon (0,\infty)\to\mathbb{R}$
by $g_k(x)=e^{-x} f_k(e^x)$ for $k=1,...,r$. Then we must see that
\begin{equation}\label{kp3}
 \limsup_{x\to\infty} \sum_{k=1}^r \alpha_k(e^x) \Bigl( g_k(x)-g_k(x+\log \alpha_k(e^x))\Bigr)
 \ge 0.
\end{equation}
We will see that
\begin{equation}\label{kp4}
\left| \int_{u}^{b(u)} \sum_{k=1}^r \alpha_k(e^x) \Bigl(
g_k(x)-g_k(x+\log \alpha_k(e^x))\Bigr)\, dx
 \right|
\end{equation}
is bounded in $u$, for some $b(u)>u$ with $b(u)-u\to\infty$ as $u\to\infty$, which implies (\ref{kp3}).

Let $\xi(u)$ be a decreasing function converging to 0 as $u\to\infty$ such that $|u\, \alpha_k'(u)|\le \xi(u)$ for $u>0$
($\xi(u)=\max\{ |x\, \alpha_k'(x)| : x\ge u\}$ will do). Then using (\ref{kp2}) and
\[
   b(u)=u+\xi(e^u)^{-1}
\]
we get that
\begin{equation}\label{kp9}
 \int_u^{b(u)} |\alpha_k'(e^x)\,e^x|\le 1.
\end{equation}
Note that the functions $g_k$ are bounded because the functions $f_k$ are Lipschitz.
Using the intermediate value theorem and (\ref{kp9}) we obtain
\[
 \left|\sum_{k=1}^r \int_u^{b(u)}  \Bigl(\alpha_k(e^x)-\alpha_k(e^{x +\log \alpha_k(e^x)})\Bigr)
  g_k(x+\log \alpha_k(e^x))\,dx \right|\le M
\]
for $u>0$ and some $M>0$. So, (\ref{kp4}) is bounded by $M$ plus
\begin{equation}\label{kp5}
 \left| \int_u^{b(u)} \sum_{k=1}^r \Bigl(\alpha_k(e^x) g_k(x)-
 \alpha_k(e^{x +\log \alpha_k(e^x)}) g_k(x+\log \alpha_k(e^x)) \Bigr)\, dx \right|.
\end{equation}
By doing the change of coordinates $y=x+\log \alpha_k(e^x)$, which is invertible for $x>a$, for some $a>0$,
due to conditions (\ref{kp1}) and (\ref{kp2}), (\ref{kp5}) becomes
\begin{align}
 &\Biggl|  \sum_{k=1}^r \Biggl( \int_u^{b(u)} \alpha_k(e^x) g_k(x)\,dx \notag \\
 &\quad\quad\quad\quad\quad
   -\int_{u+\log \alpha_k(e^u)}^{b(u)+\log \alpha_k(e^{b(u)})}
   \alpha_k(e^y) g_k(y) \frac{\alpha_k(e^{x(y)})}{\alpha_k(e^{x(y)})+
   \alpha_k'(e^{x(y)}) e^{x(y)}}\,dy \Biggr)\Biggr|\notag\\
 & \le\sum_{k=1}^r \Biggl|  \int_u^{u+\log \alpha_k(e^u)} \alpha_k(e^x) g_k(x)\,dx \Biggr|+
  \sum_{k=1}^r \Biggl|  \int_{b(u)+\log\alpha_k(e^{b(u)})}^{b(u)}  \alpha_k(e^x) g_k(x)\,dx \Biggr|
  \label{kp6}\\
 &+\sum_{k=1}^r \Biggl|  \int_{u+\log \alpha_k(e^u)}^{b(u)+\log \alpha_k(e^{b(u)})}
 \alpha_k(e^y) g_k(y) \frac{\alpha_k'(e^{x(y)}) e^{x(y)}}
 {\alpha_k(e^{x(y)})+\alpha_k'(e^{x(y)})e^{x(y)}}\,dy \Biggr|\label{kp7}.
\end{align}
The terms in (\ref{kp6}) are bounded because the functions $g_k$
and $\alpha_k$ are bounded. That (\ref{kp7}) is also bounded follows from (\ref{kp9}). Thus (\ref{kp4}) is bounded,
concluding the proof.
\end{proof}

\begin{corollary}\label{corkp}
Suppose in addition to Lemma \ref{lemakp} hypotheses there are
functions $\beta_k\colon (0,\infty)\to\mathbb{R},\,k=1,...,r$
satisfying the same hypotheses of $\alpha_k$. Then
\begin{equation*}
   \limsup_{u\to\infty} \frac{1}{u} \sum_{k=1}^r
   \Bigl(\frac{\alpha_k(u)}{\beta_k(u)} f_k(\beta_k(u) u)-f_k(\alpha_k(u) u)\Bigr)\ge 0.
\end{equation*}
\end{corollary}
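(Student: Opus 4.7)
My plan is to imitate the proof of Lemma \ref{lemakp}. Setting $u=e^x$ and $g_k(x)=e^{-x}f_k(e^x)$ as in that proof, the identity $e^{x+\log\beta_k(e^x)}=\beta_k(e^x)e^x$ cancels the $\beta_k$ in the denominator of the first term, so the corollary's summand becomes
\[
\alpha_k(e^x)\bigl(g_k(x+\log\beta_k(e^x))-g_k(x+\log\alpha_k(e^x))\bigr),
\]
with each $g_k$ bounded. Exactly as in Lemma \ref{lemakp}, it suffices to exhibit $b(u)>u$ with $b(u)-u\to\infty$ so that the integral of the sum over $k$ of this quantity over $[u,b(u)]$ is bounded uniformly in $u$.

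I would pick $\xi(u)\downarrow 0$ dominating $\max_k\{|u\alpha_k'(u)|,\,|u\beta_k'(u)|\}$ and set $b(u)=u+\xi(e^u)^{-1}$, so that the analogue of (\ref{kp9}) holds simultaneously with $\alpha_k$ and with $\beta_k$. On each summand I would then perform \emph{two} independent changes of variable, $y=x+\log\beta_k(e^x)$ in the first piece and $y=x+\log\alpha_k(e^x)$ in the second. By (\ref{kp1}) and (\ref{kp2}) applied to both families, these are diffeomorphisms from $[u,b(u)]$ onto slightly shifted images for large $u$, and each Jacobian differs from $1$ by a quantity whose integral over $[u,b(u)]$ is at most $1$. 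After both substitutions, the two transformed integrals take the common shape $\int \alpha_k(e^y)g_k(y)\,dy$ over shifted copies of $[u,b(u)]$.

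Their difference therefore splits, in perfect analogy with how (\ref{kp4}) splits into (\ref{kp6}) and (\ref{kp7}) in the proof of Lemma \ref{lemakp}, into two kinds of contributions: boundary terms over intervals of length $|\log\alpha_k(e^u)|$, $|\log\beta_k(e^u)|$ and their counterparts at $b(u)$ — all bounded since $\log\alpha_k$ and $\log\beta_k$ are uniformly bounded (by (\ref{kp1}) plus boundedness of $\alpha_k,\beta_k$) and $\alpha_k,g_k$ are bounded; plus Jacobian-error terms bounded by (\ref{kp9}) applied to both $\alpha_k$ and $\beta_k$. Summing over $k=1,\ldots,r$ yields the required uniform bound on the integral, hence the desired $\limsup$ inequality.

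The principal bookkeeping step, handled exactly as in Lemma \ref{lemakp}, is to replace the post-substitution prefactor $\alpha_k(e^{x(y)})$ by $\alpha_k(e^y)$ up to an error absorbable into the Jacobian terms; this is done by the intermediate value theorem together with (\ref{kp9}), now applied once for each of the two changes of variable. This is the main technical point, but it requires no idea beyond those already present in the proof of Lemma \ref{lemakp}.
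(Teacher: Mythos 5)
Your argument is correct, but it takes a genuinely different route from the paper. The paper's proof of Corollary \ref{corkp} is a two-line reduction: it sets $g_k(u)=f_k(\beta_k(u)u)$, observes that $g_k$ is Lipschitz and that $\alpha_k/\beta_k$ satisfies (\ref{kp1}) and (\ref{kp2}), rewrites the summand as $\frac{\alpha_k(u)}{\beta_k(u)}g_k(u)-g_k\bigl(\frac{\alpha_k(u)}{\beta_k(u)}u\bigr)$, and invokes Lemma \ref{lemakp} as a black box. You instead re-run the whole integral mechanism of Lemma \ref{lemakp} with two changes of variable, $y=x+\log\beta_k(e^x)$ and $y=x+\log\alpha_k(e^x)$, choosing $\xi$ to dominate both $|u\alpha_k'(u)|$ and $|u\beta_k'(u)|$ so that the analogue of (\ref{kp9}) holds for both families. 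The trade-off is clear: the paper's reduction is far shorter, but its displayed identity is only exact up to the discrepancy $g_k\bigl(\frac{\alpha_k(u)}{\beta_k(u)}u\bigr)=f_k\bigl(\beta_k(\frac{\alpha_k(u)}{\beta_k(u)}u)\cdot\frac{\alpha_k(u)}{\beta_k(u)}u\bigr)$ versus the desired $f_k(\alpha_k(u)u)$ — these agree only when $\beta_k$ is constant, and in general one must estimate the difference by the same intermediate-value-theorem-plus-(\ref{kp9}) device, so the ``immediate'' reduction quietly needs one more error term. Your direct approach sidesteps that subtlety entirely (the prefactor replacements and boundary intervals you describe are all genuinely bounded, since $\log\alpha_k$ and $\log\beta_k$ are bounded by (\ref{kp1}) and the boundedness of $\alpha_k,\beta_k$), at the cost of repeating the bookkeeping of (\ref{kp6})--(\ref{kp7}). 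Both are valid; yours is longer but more self-contained and, as written, more airtight.
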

\begin{proof}
Define $g_k\colon (0,\infty)\to \mathbb{R}$ by
$g_k(u)=f_k(\beta_k(u) u)$. Since the functions $\beta_k$ satisfy
the same conditions (\ref{kp1}) and (\ref{kp2}) as $\alpha_k$, we
can easily see that the functions $\frac{\alpha_k}{\beta_k}$ also
do satisfy them, and that $g_k$ are Lipschitz functions. Since
\[
  \frac{\alpha_k(u)}{\beta_k(u)} f_k(\beta_k(u) u)-f_k(\alpha_k(u) u)=
  \frac{\alpha_k(u)}{\beta_k(u)} g_k(u)-g_k(\frac{\alpha_k(u)}{\beta_k(u)} u),
\]
we can apply Lemma \ref{lemakp}.
\end{proof}

Unfortunately, Lemma \ref{lemakp} does not hold when we substitute hypothesis (\ref{kp2}) by the weaker one
\begin{equation}\label{weaker}
 |u \,\alpha_k'(u)|\le C
\end{equation}
for $u>0$, for some constant $C>0$. This is shown in the next example which was kindly communicated to me by Gustavo
Moreira (Gugu).

\begin{example}\label{contraexemplo}
Let $h(x)$ and $c(x)$ be bounded $\mathrm{C}^1$ functions with bounded derivative and $c(x)>b$ for all $x>0$, for some
$b>0$. Then $f(x)=x h(\log x)$ is a Lipschitz function and $\alpha(x)=c(\log x)$ satisfies hypotheses (\ref{kp1}) and
(\ref{weaker}). We have that
\[
  \alpha(u)f(u)-f(\alpha(u) u)=u c(\log u) (h(\log u)-h(\log u + \log c(\log u))).
\]
If we put $h(x)=\sin x$ and $c(x)=e^{\cos x}$, we have that $\alpha(u)f(u)-f(\alpha(u) u)\le 0$ for all $u>0$. Taking
$h_k(x)=h(x+k\pi/2),\,c_k(x)=c(x+k\pi/2),\,f_k(x)=x h_k(\log x)$ and $\alpha_k(x)=c_k(\log x)$ for $k=1,2$, we have that
\[
  \frac{1}{u} \sum_{k=1}^r  \Bigl(\alpha_k(u) f_k(u)-f_k(\alpha_k(u) u)\Bigr) \le -\delta
\]
for all $u>0$, for some $\delta>0$.
\end{example}
\text{}

\emph{Problem:} Does Lemma \ref{lemakp} hold with hypothesis (\ref{weaker}) instead of (\ref{kp2}), for \emph{generic
$f_k$ and $\alpha_k$} ?
\\\\
If the answer to this problem is affirmative in some sense then we believe we can compute the Hausdorff dimension of
\emph{generic self--affine Sierpi\'nski sponges} which are the 3-dimensional versions of the self--affine Sierpi\'nski
carpets.

\text{}\\

\textbf{Acknowledgments:} I wish to thank Gustavo Moreira for communicating to me Example \ref{contraexemplo}.
This work was supported by Fundação para a Ciência e a Tecnologia (Portugal).


\begin{thebibliography}{xx}
 \bibitem[1]{1}
  K. Bara\'{n}ski, Hausdorff dimension of the limit sets of some planar geometric constructions,
  \emph{Adv. Math.} \textbf{210} (2007), 215-245.
 \bibitem[2]{2}
  T. Bedford, Crinkly curves, Markov partitions and box dimension of self similar sets,
  \emph{PhD Thesis}, University of Warwick, 1984.
 \bibitem[3]{3}
  R. Bowen, Hausdorff dimension of quasi-circles, \emph{Publ. Math. I.H.E.S.} \textbf{50} (1979),
  11-26.
 \bibitem[4]{4}
  K. Falconer, \emph{Fractal geometry, Mathematical foundations and applications}, Wiley, 1990.
 \bibitem[5]{5}
  D. Gatzouras and P. Lalley, Hausdorff and box dimensions of certain self-affine fractals,
  \emph{Indiana Univ. Math. J.} \textbf{41} (1992), 533-568.
 \bibitem[6]{6}
  D. Gatzouras and Y. Peres, Invariant measures of full dimension for some expanding maps,
  \emph{Ergod. Th. and Dynam. Sys.} \textbf{17} (1997), 147-167.
 \bibitem[7]{7}
  R. Kenyon and Y. Peres, Measures of full dimension on affine-invariant sets,
  \emph{Ergod. Th. \& Dynam. Sys.} \textbf{16} (1996), 307-323.
 \bibitem[8]{8}
  N. Luzia, A variational principle for the dimension for a class of non-conformal repellers,
  \emph{Ergod. Th. \& Dynam. Sys.} \textbf{26} (2006), 821-845.
 \bibitem[9]{9}
  N. Luzia, Measure of full dimension for some nonconformal repellers, to appear in \emph{Disc. \& Cont. Dynam. Syst.}
  (http://www.arxiv.org/abs/0705.3604v1).
 \bibitem[10]{10}
  C. McMullen, The Hausdorff dimension of general Sierpi\'nski carpets, \emph{ Nagoya Math. J.}
  \textbf{96} (1984), 1-9.
 \bibitem[11]{11}
  Ya. Pesin, \emph{Dimension Theory in Dynamical Systems: Contemporary Views and Applications},
  Chicago University Press, 1997.
 \bibitem[12]{12}
  D. Ruelle, Repellers for real analytic maps, \emph{Ergod. Th. and Dynam. Sys.} \textbf{2} (1982),
  99-107.
\end{thebibliography}
\end{document}